\def\final{0}  % set this to 1 to get a comment-free version
\def\iflong{\iffalse}
\newcommand{\mnote}[1]{{\color{red}[{Madhu: \bf #1}]\marginpar{\color{red}*}}}
\newcommand{\enote}[1]{{\color{green}[{\small Elena: \bf #1}]\marginpar{\color{red}*}}}
\newcommand{\minote}[1]{{\color{purple}[{Minshen: \bf #1}]\marginpar{\color{red}*}}}
\newcommand{\todo}[1]{{\color{red}[{ TODO: \bf #1}]\marginpar{\color{red}*}}}
\newcommand{\mnote}[1]{}
\newcommand{\enote}[1]{}
\newcommand{\minote}[1]{}
\newcommand{\todo}[1]{}
\title{{Limitations of Mean-Based Algorithms for Trace Reconstruction at Small Distance}
\thanks{A preliminary version of this work appeared in \textit{2021 IEEE International Symposium on Information Theory (ISIT)}.}}
\author{Elena Grigorescu\thanks{Purdue University, Email: \{elena-g, zhu628\}@purdue.edu. Research supported in part by NSF CCF-1910659 and NSF CCF-1910411}
\and Madhu Sudan\thanks{Harvard University, Email: madhu@cs.harvard.edu. Research supported in part by a Simons Investigator Award  and  NSF Award  CCF  1715187.}
\and Minshen Zhu\footnotemark[2]
}
\date{\today}
\newtheorem{proposition}{Proposition}
\newtheorem{lemma}{Lemma}
\newtheorem{theorem}{Theorem}
\newtheorem{corollary}{Corollary}
\newtheorem{remark}{Remark}
\newcommand{\abs}[1]{\left\vert#1\right\vert}
\newcommand{\set}[1]{\left\{#1\right\}}
\newcommand{\tuple}[1]{\left(#1\right)} 
\newcommand{\eps}{\varepsilon}
\newcommand{\tp}{\tuple}
\def\*#1{\mathbf{#1}}
\def\+#1{\mathcal{#1}}
\def\Exp{\mathop{\mathbb E}}
\newcommand{\norm}[1]{\left\lVert#1\right\rVert}
\newcommand{\itn}[1]{^{(#1)}}
\newcommand{\N}{\mathbb{N}}
\newcommand{\bmat}[1]{\begin{bmatrix} #1 \end{bmatrix}}
\newcommand{\dH}{d_{\textsf{H}}}
\newcommand{\dE}{d_{\textsf{E}}}
\begin{document}

\maketitle

\begin{abstract}
    {\em Trace reconstruction} considers the task of recovering an unknown string $\*x\in\{0,1\}^n$ given a number of independent ``traces'', i.e., subsequences of $\*x$ obtained by randomly and independently deleting every symbol of $\*x$ with some probability $p$. The information-theoretic limit of the number of traces needed to recover a string of length $n$ is still unknown. This limit is essentially the same as the number of traces needed to determine, given strings $\*x$ and $\*y$ and traces of one of them, which string is the source.
    
    The most-studied class of algorithms for the worst-case version of the problem are ``mean-based'' algorithms. These are a restricted class of distinguishers that only use the mean value of each coordinate on the given samples. In this work we study limitations of mean-based algorithms on strings at small Hamming or edit distance.
    
    We show that, on the one hand, distinguishing strings that are nearby in Hamming distance is ``easy'' for such distinguishers. On the other hand, we show that distinguishing strings that are nearby in edit distance is ``hard'' for mean-based algorithms. Along the way, we also describe a connection to the famous Prouhet-Tarry-Escott (PTE) problem, which shows a barrier to finding explicit hard-to-distinguish strings: namely such strings would imply explicit short solutions to the PTE problem, a well-known difficult problem in number theory. Furthermore, we show that the converse is also true, thus, finding explicit solutions to the PTE problem is equivalent to the problem of finding explicit strings that are hard-to-distinguish by mean-based algorithms.
    
    Our techniques rely on complex analysis arguments that involve careful trigonometric estimates, and algebraic techniques that include applications of Descartes' rule of signs for polynomials over the reals.
\end{abstract}
\thispagestyle{empty}

\section{Introduction}
In the trace-reconstruction problem, a string $\*x\in \{0,1\}^n$ is sent over a deletion channel, which deletes each entry independently, with probability $p\in[0,1)$, resulting in a {\em trace} $\tilde{\*x}\in \{0,1\}^{\ell}$ of smaller length. The goal is to reconstruct $\*x$ exactly, from a small set of independent traces. The trace-reconstruction problem was introduced by Batu, Kannan, Khanna and McGregor \cite{BatuKKM04}, motivated by a natural problem in computational biology, in which a common ancestor DNA sequence is sought from a set of similar DNA sequences that might have resulted from the process of  random deletions in the ancestor DNA.   The information-theoretic limits and tight complexity of this problem have proven elusive so far, despite significant followup interest in a variety of relevant settings \cite{BatuKKM04, Kannan005, ViswanathanS08, HolensteinMPW08, McGregorPV14, PeresZ17, nazarov2017trace, de2019optimal, GabrysM17,  HoldenPP18, HoldenL18, HartungHP18, GabrysM19, cheraghchi2020coded, KrishnamurthyM019, brakensiek2020coded, chen2020polynomialtime, chase2021separating, narayanan2021circular}. 
The current upper bound in the worst-case formulation was recently improved by Chase \cite{chase2021separating}, who showed that $\exp(\tilde O(n^{1/5}))$ traces are sufficient for reconstruction, thus beating the previous record of $\exp(O(n^{1/3}))$ traces due to \cite{nazarov2017trace, de2019optimal}.  
However, the most general lower bound is only ${\tilde{\Omega}}(n^{3/2})$ \cite{HoldenL18, chase2021new}, hence leaving the status of the problem widely open.

To gain more insight into the trace-reconstruction problem, we study the {\em trace-distinguishing} variant, in which, given two string $\*x, \*y\in \{0,1\}^n$, the algorithm receives traces from one of the two trace distributions and is tasked to output the correct one. The trace-distinguishing problem is information theoretically equivalent to the classical trace-reconstruction problem \cite{HolensteinMPW08}. 
From a computational standpoint, the same upper and lower bounds as for the general problem hold for the trace-distinguishing variant.

In this work we aim to get more insight into the worst-case trace distinguishing problem from understanding the role of {\em distance} in the complexity of the problem. We ask the following questions: Are all pairs of  strings that are close in Hamming distance easily distinguishable? Are all pairs of strings that are close in edit distance easily distinguishable? Note that the strings used for showing the lower bounds in \cite{HoldenL18, chase2021new} 
only differ in two locations, and are indeed efficiently distinguishable (these were the strings $\*x=(01)^k101(01)^k$ and $\*y=(01)^k011(01)^k$). On the other hand, it is also reasonable to believe that trace distributions of 
strings that are very different from each other are also easily distinguishable. In fact, there exist ``codes'', namely sets of strings that are very far from each other, whose elements (codewords) lead to trace distributions that are very easily distinguishable from each other \cite{cheraghchi2020coded, brakensiek2020coded}. These codes can be constructed by efficient algorithms,  leading to some partial notion of explicitness that may be later exploited in further algorithms for the trace-reconstruction problem.

%existential results, revealing little explicit structure that may help in further analysis.

Here we approach the above questions by analyzing a restricted class of algorithms, namely mean-based. Mean-based algorithms only use the empirical mean of individual bits, and hence they operate by disregarding the actual samples, and computing only with the information given by the averages of each bit $\tilde{\*x}_i$ over the sample set $S$ of independent traces, namely $\Exp_S[\tilde{\*x}_i]$. While they appear restrictive, mean-based algorithms are in fact a very powerful class of algorithms -- for example, the upper bounds of \cite{de2019optimal, nazarov2017trace} are obtained via mean-based algorithms.

However, there exist strings $\*x, \*y \in \{0,1\}^n$ \cite{de2019optimal, nazarov2017trace} that mean-based algorithms cannot distinguish with fewer than 
 $\exp(\Omega(n^{1/3}))$ traces. This lower bound is based on a result in complex analysis \cite{borwein1997littlewood}, which only implies the existence of such strings $\*x$ and $\*y$, and not what such strings would look like structurally. In particular, we don't even have efficient algorithms for constructing such strings. 

Our main results here prove that there exist {\em explicit} strings $\*x, \*y\in \{0,1\}^n$ at edit distance only $4$,  for which every mean-based algorithm requires a {\em super-polynomial} in $n$ number of samples. By ``explicit'' strings we mean strings whose support set can be described mathematically, by algebraic equations  (say, for example, $\*x\in\{0,1\}^n$ is such that $\*x_i=1$ iff $i=2^k$, for some integer $k$).
 
On the other hand, we identify some structural properties of strings at low edit distance that yield polynomial-time mean-based trace reconstruction. In \cite{KrasikovR97, KrishnamurthyM019} the authors show that strings at small Hamming distance are efficiently distinguishable. We complement these results by observing that they are efficiently distinguishable even by mean-based algorithms.

We believe that understanding structural properties that are bottlenecks (such as explicit, hard-to-distinguish strings) for the algorithms we know of, as well as understanding structural properties that lead to fast algorithms, are necessary steps towards understanding the complexity of the trace-reconstruction problem.

We formalize our results next.

\subsection{Our results}

 We start with an observation about strings at small Hamming distance.
 
\begin{restatable}{theorem}{mainConstHamming} \label{thm:main-const-Hamming}
Let $\*x, \*y \in \set{0, 1}^n$ be two distinct strings within Hamming distance $d$ from each other. There is a mean-based algorithm that distinguishes between $\*x$ and $\*y$ with high probability using $n^{O(d)}$ traces.
\end{restatable}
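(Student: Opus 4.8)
The plan is to exhibit a single linear functional of the empirical means $\set{E_S(\tilde x_i)}_i$ whose expectation differs between the two sources by at least a constant depending only on $p$ and $d$, yet has small enough per-trace variance that $n^{O(d)}$ traces estimate it to within that gap. Write $q=1-p$ for the retention probability. A bit originally at position $i$ (indexing from $0$) lands at position $j$ of the trace with probability $\binom{i}{j}q^{j+1}p^{i-j}$, so $\bar x_j\defeq\Exp[\tilde x_j]=q^{j+1}\sum_{i\ge j}x_i\binom{i}{j}p^{i-j}$, a finite sum since traces have length at most $n$. For an integer $m\ge 0$ consider the ``$m$-th binomial moment'' $A_m(\*x)\defeq\sum_j\binom{j}{m}\bar x_j$. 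Substituting the formula for $\bar x_j$, applying $\binom{j}{m}\binom{i}{j}=\binom{i}{m}\binom{i-m}{j-m}$, and summing the inner binomial series yields the closed form $A_m(\*x)=q^{m+1}\sum_i x_i\binom{i}{m}$. Hence, with $\Delta\defeq\*x-\*y\in\set{-1,0,1}^n$ supported on a set $D$ with $\abs{D}=\dH(\*x,\*y)\le d$, we get $A_m(\*x)-A_m(\*y)=q^{m+1}\phi_m$ where $\phi_m\defeq\sum_{i\in D}\Delta_i\binom{i}{m}$ is an integer.

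The crux is to show that $\phi_{m^*}\ne 0$ for some $m^*\in\set{0,1,\dots,d-1}$. Restricting to $D=\set{i_1<\cdots<i_{d'}}$ with $d'=\dH(\*x,\*y)\ge 1$, the matrix $\big(\binom{i_k}{m}\big)_{0\le m\le d'-1,\,1\le k\le d'}$ factors as $\diag(1/m!)$ times a unipotent lower-triangular change-of-basis matrix (falling factorials to monomials) times the Vandermonde matrix $(i_k^m)$, hence is nonsingular; since $\Delta|_D\ne 0$ it cannot be annihilated by all of $\phi_0,\dots,\phi_{d'-1}$. (Equivalently, $\phi_0=\cdots=\phi_{d-1}=0$ would force $(z-1)^d$ to divide the polynomial $\sum_i\Delta_i z^i$, which has at most $d$ nonzero terms, contradicting Descartes' rule of signs — a nonzero polynomial with at most $d$ terms has at most $d-1$ positive real roots counted with multiplicity.) Fixing such an $m^*$ (computable from $\*x,\*y$), we obtain $\abs{A_{m^*}(\*x)-A_{m^*}(\*y)}=q^{m^*+1}\abs{\phi_{m^*}}\ge q^d$ since $\abs{\phi_{m^*}}\ge 1$.

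The algorithm then draws $T$ traces $\tilde{\*x}^{(1)},\dots,\tilde{\*x}^{(T)}$, forms $\hat A=\frac1T\sum_{s}\sum_j\binom{j}{m^*}\tilde x^{(s)}_j$ — a fixed linear combination of the empirical means, so the algorithm is mean-based — and outputs whichever of $\*x,\*y$ makes $A_{m^*}(\cdot)$ closer to $\hat A$. Each per-trace summand $\sum_j\binom{j}{m^*}\tilde x^{(s)}_j$ lies in the interval $[\,0,\binom{n}{m^*+1}\,]\subseteq[0,n^d]$ (hockey-stick identity, using the trace length $\le n$), so by Hoeffding's inequality $\abs{\hat A-\Exp\hat A}<q^d/2$ with high probability once $T=O(n^{2d}q^{-2d}\log n)$, which is $n^{O(d)}$ for $p$ (hence $q$) bounded away from $1$; since $\Exp\hat A=A_{m^*}$ of the true source and the two candidate values are separated by at least $q^d$, this identifies the source correctly.

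I expect the subtle point to be not the concentration bound but the choice of functional. An arbitrary linear functional of the $\bar x_j$'s that separates $\*x$ from $\*y$ — for instance, one coming from evaluating the difference polynomial $\sum_i\Delta_i z^i$ at a generic point, or from reading off a high-index coordinate of $\bar{\*x}-\bar{\*y}$ — will typically have exponentially large coefficients or an exponentially small separation, giving $\exp(\Omega(n))$ sample complexity. The binomial moments thread the needle: they simultaneously keep the separation at least $q^d$ and the per-trace range at most $n^{O(d)}$, and the only genuine content beyond bookkeeping is the nonvanishing of $\phi_{m^*}$ for some small $m^*$, which is exactly the Vandermonde/Descartes fact above.
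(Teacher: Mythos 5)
Your proposal is correct and follows essentially the same route as the paper: the binomial moments $A_m$ are exactly the paper's potential functions $\Phi_k(\tilde{\*x})=\sum_i\binom{i}{k}\tilde{x}_i$, the closed form $q^{m+1}\sum_i x_i\binom{i}{m}$ is derived via the same binomial identity, the nonvanishing of some $\phi_{m^*}$ with $m^*\le d-1$ is the paper's Vandermonde/trivial-kernel argument (your Descartes aside being a valid alternative), and the Hoeffding step matches the paper's concentration analysis up to bookkeeping.
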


The result is a slight strengthening of a recent result of  \cite{KrishnamurthyM019}, who proved exactly the same bounds for general algorithms. A weaker version was also shown in \cite{KrasikovR97, scott1997reconstructing}, where it is proved that strings at Hamming distance $2k$ have distinct $k$-decks, i.e. multisets of all $n\choose k$ subsequences of length $k$. Our contribution here is essentially to notice that the techniques of \cite{KrasikovR97, scott1997reconstructing} imply that mean-based algorithms can in fact distinguish such trace distributions (see Appendix~\ref{sec:const-Hamming} for a more detailed discussion and the complete proof).

Our main results concern the negative results at small edit distance. 

\begin{restatable}{theorem}{mainEDFour} \label{thm:main-edist-four}
Assume the deletion probability $p \in (0,1)$. There exist (explicit) strings $\*x, \*y \in \set{0, 1}^n$ within edit distance 4 of each other such that any mean-based algorithm requires $\exp\tp{\Omega(\log^2 n)}$ traces to distinguish between $\*x$ and $\*y$ with high probability.
\end{restatable}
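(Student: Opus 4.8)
\textit{Proof plan.}
The plan is to pass, via the standard analysis of mean\nobreakdash-based algorithms (Nazarov--Peres and De--O'Donnell--Servedio), to a complex-analytic statement about the difference polynomial. Writing $X(z)=\sum_i x_i z^i$ and $Y(z)=\sum_i y_i z^i$, for deletion probability $1/2$ the coordinate-wise trace means of $\*x$ have generating function $\tfrac12 X(\tfrac{1+w}{2})$, so the discrepancy a mean-based algorithm sees is governed by $X-Y$ evaluated on the circle $\mathcal C=\{\tfrac{1+w}{2}:|w|=1\}$ of radius $1/2$ through $0$ and $1$; in particular $\exp(\Omega(\log^2 n))$ traces are forced as soon as one can exhibit explicit $\*x,\*y\in\{0,1\}^n$ with $\dE(\*x,\*y)\le 4$ and
\[
\max_{z\in\mathcal C}\,|X(z)-Y(z)|\ \le\ \exp\!\big(-\Omega(\log^2 n)\big).
\]

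To control the edit distance I would take $\*x=\*u\cdot\*b\cdot\*v$ and $\*y=\*u\cdot\*v\cdot\*b$, where $\*b=10$ is a fixed two-symbol block, $|\*u|=\Theta(n)$, and $\*v\in\{0,1\}^{L}$ with $L=\Theta(n)$ is to be chosen; relocating $\*b$ across $\*v$ uses only two deletions and two insertions, so $\dE(\*x,\*y)\le 4$ for \emph{every} $\*v$. A direct computation gives
\[
X(z)-Y(z)\ =\ z^{|\*u|}\,(z-1)\Big((z+1)V(z)-\tfrac{z^{L}-1}{z-1}\Big)\ =:\ z^{|\*u|}(z-1)\,B(z),
\]
and $B$ automatically has coefficients in $\{-1,0,1\}$. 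Since $|z|=|\cos(\phi/2)|$ on $\mathcal C$, the factor $z^{|\*u|}$ already makes $|X-Y|$ smaller than $\exp(-\Omega(\log^2 n))$ on the part of $\mathcal C$ with $|\phi|$ not too small (roughly $|\phi|\gtrsim n^{-1/2}\log n$), so the whole burden is to choose $\*v$ making $B$ extremely small on the short arc of $\mathcal C$ around $z=1$. As $B$ has integer coefficients this forces $B$ to vanish to high order at $z=1$, i.e.\ $(z+1)V(z)\equiv\tfrac{z^{L}-1}{z-1}\pmod{(z-1)^{k}}$ with $k=\Theta(\log n)$ --- which is exactly the assertion that the $1$-positions of $\*v$ realize an explicit, length-$O(n)$ Prouhet--Tarry--Escott--type solution of degree $\Theta(\log n)$. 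I would build such a $\*v$ by perturbing the alternating string $(10)^{L/2}$ at a Thue--Morse--indexed set of positions: the boundedness of the partial sums of the Thue--Morse sequence, together with Descartes' rule of signs applied to the real polynomials that arise, is what simultaneously keeps the perturbed word a genuine $\{0,1\}$-string and injects the $(z-1)^{\Theta(\log n)}$ factor into $B$.

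Finally I would estimate $|B(z)|$ on the arc $|\phi|\lesssim n^{-1/2}\log n$. Writing $z=\cos(\phi/2)\,e^{i\phi/2}$ and using the product structure inherited from the Thue--Morse factors $\prod_i(z^{2^i}-1)$, one bounds $|B(z)|$ by a product of the trigonometric quantities $|z^{2^i}-1|$: the ``low'' factors ($2^i\phi\ll1$) contribute $\exp(-\Omega(\log^2(1/\phi)))$, the ``high'' factors telescope to $O(1)$, and the $O(\log n)$ intermediate factors cost only a polynomial in $n$; since $1/\phi$ is polynomially large this yields $|B(z)|\le\exp(-\Omega(\log^2 n))$ there, and combining with the $z^{|\*u|}$ bound on the complementary arc gives $\max_{\mathcal C}|X-Y|\le\exp(-\Omega(\log^2 n))$, as needed. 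The main obstacle is the tension inside the choice of $\*v$: edit-distance-$4$ realizability pins $X-Y$ to the rigid algebraic shape $z^{|\*u|}(z-1)((z+1)V-\frac{z^L-1}{z-1})$, whereas smallness on $\mathcal C$ demands both a $(z-1)^{\Theta(\log n)}$ factor and flatness away from $z=1$; reconciling these with an \emph{explicit} binary $\*v$ is the heart of the argument, and even granting the construction, the near-$1$ trigonometric product estimate is delicate because $\prod_i|z^{2^i}-1|$ is far from uniformly small and must be split into the three regimes above.
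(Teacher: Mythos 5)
Your overall architecture is the same as the paper's: reduce to bounding $\sup_{w\in\partial B(1/2;1/2)}\abs{Q_{\*x}(w)-Q_{\*y}(w)}$, realize edit distance $4$ by sliding a two-bit block across a long string $\*v$ so that the difference factors as $z^{|\*u|}\bigl((z^2-1)V(z)-(z^L-1)\bigr)$, force this to be a product of terms $(z^{m}-1)$ with geometrically growing exponents $m$ so that it vanishes to order $\Theta(\log n)$ at $z=1$, and then split the circle into a ``far from $1$'' arc (killed by $|z|^{|\*u|}$) and a ``near $1$'' arc (killed by the trigonometric product estimate, giving $\exp(-\Omega(\log^2 n))$ from $\Theta(\log n)$ factors). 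All of that matches the paper, which takes $V$ = (alternating string) $\pm\prod_{j=0}^{k}(1-w^{3^j})$ and performs exactly this two-case trigonometric analysis.

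The genuine gap is in the one step you yourself identify as the heart of the argument: the explicit binary realizability of $\*v$. You propose to perturb the alternating string at Thue--Morse-indexed positions, i.e.\ to use the product $\prod_{i}(z^{2^i}-1)$. For the perturbed word to be a $\{0,1\}$-string you need the $\pm1$ coefficients of the perturbing polynomial to align with degree parity (negative only at positions where the alternating string has a $1$, positive only where it has a $0$). With exponents $2^i$ this fails: the coefficient of $z^m$ in $\prod_{i=0}^{k}(z^{2^i}-1)$ has sign $\pm(-1)^{s_2(m)}$ (binary digit sum), which is not a function of $m\bmod 2$; already $(z-1)(z^2-1)=z^3-z^2-z+1$ has a $+1$ at an odd degree and a $-1$ at an even degree, and one can check directly that $\bigl(z^L-1\pm\prod_{i=0}^{k}(z^{2^i}-1)\bigr)/(z^2-1)$ stops being a $0/1$ polynomial already at $k=2$. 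Neither the boundedness of Thue--Morse partial sums nor Descartes' rule (which the paper uses only for its positive, easy-to-distinguish results) repairs this clash of coefficients. The paper's fix is precisely to replace $2^i$ by the \emph{odd} exponents $3^j$: since each $3^j$ is odd, a monomial arising from choosing $|S|$ factors has degree of parity $|S|\bmod 2$ and sign $(-1)^{|S|}$, so sign and degree parity are locked together, and adding the alternating pattern yields a genuine binary string while keeping the same sparsity, the same order-$(k+1)$ vanishing at $1$, and the same near-$1$ product estimate. So your plan goes through once the powers of $2$ are replaced by powers of $3$ (or any geometrically growing sequence of odd exponents); as written, the construction of $\*v$ does not produce a valid string and the theorem is not yet proved.
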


Along the way, we also formalize a connection to the famous Prouhet-Tarry-Escott (PTE) \cite{prouhet1851memoire, dickson2013history, wright-prouhetssol} problem from number theory. The PTE problem is related to classical variants of the Waring problem and problems about minimizing the norm of cyclotomic polynomials, considered by Erd\"{o}s and Szekeres \cite{erdos1959product, BorweinIngalls}. 
Perhaps not surprisingly, our explicit solution from Theorem \ref{thm:main-edist-four} is based on products of cyclotomic polynomials. 

In the PTE problem, given an integer $k\geq 0$, one would like to find sets $A$ and $B$ of integer solutions, with $A=\{\alpha_1, \alpha_2, \ldots, \alpha_s\}$ and $B=\{\beta_1, \beta_2, \ldots, \beta_s\},$ satisfying the system $\sum_{i\in [s]} \alpha_i^j=\sum_{i\in [s]} \beta_i^j$, for all $j\in [k]$, with $\alpha_i\ne \beta_j$ for all $i, j \in [s]$. The goal is to find such solutions with {\em size}  $s$ as small as possible compared to the {\em degree} $k$.  It is easy to show that, most generally, it must be the case that $s\geq k+1$; and a pigeon-hole counting argument shows the existence of solutions with $s=O(k^2)$~\cite{wright1935tarry}. With the additional constraint that the system is not satisfied for degree $k+1$,  solutions of size $s=O(k^2 \log k)$ are known to exist \cite{hua-book}. However, all these are existential, non-constructive solutions, and the only general explicit solutions have size $s=\Theta(2^k)$ (e.g., \cite{wright-prouhetssol,BorweinIngalls}).

We note that connections between the trace-reconstruction problem and the PTE problem have been previously made.
In particular, Krasikov and Roditty \cite{KrasikovR97} noticed that pairs of strings that have the same $k$ decks yield solutions to PTE systems. 

%As a consequence of our results, w
We first show that explicit strings that are exponentially hard to distinguish by mean-based algorithms imply solutions of small size to a PTE system, as follows. This can be viewed as a deeper reason for why the negative result for mean-based algorithms in~\cite{nazarov2017trace, de2019optimal} is based on non-constructive arguments.

\begin{restatable}{theorem}{hardstringtoPTE} \label{thm:hardstring-to-PTE}
Fix any $\eps \in (0, 1/3]$. Given distinct strings $\*x, \*y \in \set{0,1}^n$ such that any mean-based algorithm requires $\exp\tp{\Omega(n^\varepsilon)}$ traces to distinguish between $\*x$ and $\*y$, the following two sets constitute a solution to the degree-$k$ PTE system
\begin{align*}
    D(\*x) = \set{i \colon x_i = 1}, \quad D(\*y) = \set{i \colon y_i = 1},
\end{align*}
with size $n = (k\log^2 k)^{1/\eps}$.
\end{restatable}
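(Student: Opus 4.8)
The plan is to connect "hard for mean-based algorithms" to "vanishing of a certain generating polynomial on a large arc near $1$", and then to translate that vanishing into the PTE equations via Taylor expansion at $z=1$. Recall the standard setup (as in \cite{de2017optimal,nazarov2017trace}): for deletion probability $p$, the expected value of the $j$-th bit of a trace of $\*x$, as $j$ ranges over coordinates, is captured by the univariate polynomial $P_{\*x}(z) = \sum_{i} x_i z^{i}$ evaluated along the curve $z = q\,w/(1-p w)$ for $w$ on the unit circle (with $q = 1-p$). A mean-based algorithm distinguishes $\*x$ from $\*y$ using $T$ traces only if the relevant single-bit expectations differ by at least $\approx T^{-1/2}$ somewhere; hence "$\exp(\Omega(n^\varepsilon))$ traces are required" forces $|P_{\*x}(z) - P_{\*y}(z)|$ to be $\exp(-\Omega(n^\varepsilon))$-small at every point of the image curve. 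Writing $A(z) = P_{\*x}(z) - P_{\*y}(z) = \sum_i (x_i - y_i) z^i$, this is a polynomial with $\{-1,0,1\}$ coefficients, of degree $< n$, that is uniformly tiny on an arc of the image curve through $z = 1$.

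The key analytic step is to upgrade "uniformly tiny on an arc" to "high-order vanishing at $z=1$." The image curve passes through $z=1$ (at $w=1$), and one parametrizes a sub-arc of length roughly $n^{-\varepsilon/2}$ (or an appropriate polynomial-in-$n$ scale) around $z=1$ on which $|A(z)| \le \exp(-\Omega(n^\varepsilon))$. Since $A$ has integer coefficients bounded by $1$ and degree $< n$, its derivatives at $z=1$ are integers of magnitude at most $n^{O(k)}$ for the first $k$ of them; by a Markov-brothers / finite-differences estimate (or simply by interpolating $A$ at $k+1$ nearby points on the arc and bounding the divided differences), if $A^{(j)}(1) \ne 0$ for some $j \le k$ then $A$ cannot be smaller than roughly $(\text{arc length})^{k} \cdot n^{-O(k)}$ on the arc. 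Choosing the parameters so that $(\text{arc length})^{k} n^{-O(k)} = n^{-O(k)}$ dominates $\exp(-\Omega(n^\varepsilon))$ — which is exactly where the bound $n = (k\log k)^{2/\varepsilon}$ comes from, since then $k \log k = n^{\varepsilon/2}$ and $\exp(-\Omega(n^\varepsilon)) = \exp(-\Omega((k\log k)^2))$ beats $n^{-O(k)} = \exp(-O(k\log n)) = \exp(-O(k \cdot \tfrac{2}{\varepsilon}\log(k\log k)))$ — we conclude $A^{(j)}(1) = 0$ for all $j = 0, 1, \dots, k$.

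Finally I would translate $A^{(j)}(1) = 0$ for $0 \le j \le k$ into the PTE equations. We have $A(z) = \sum_{i \in D(\*x)} z^i - \sum_{i \in D(\*y)} z^i$ (the common $1$-coordinates cancel), so $A^{(j)}(1) = 0$ says $\sum_{i \in D(\*x)} i(i-1)\cdots(i-j+1) = \sum_{i \in D(\*y)} i(i-1)\cdots(i-j+1)$, i.e. the two sets have equal $j$-th falling-factorial power sums for $j = 0, \dots, k$. Since falling factorials $\{x^{\underline{0}}, x^{\underline{1}}, \dots, x^{\underline{k}}\}$ and ordinary powers $\{x^0, x^1, \dots, x^k\}$ span the same space of polynomials of degree $\le k$, this is equivalent to $\sum_{i \in D(\*x)} i^j = \sum_{i \in D(\*y)} i^j$ for all $j \in \{0,1,\dots,k\}$, which is precisely the degree-$k$ PTE system; note $|D(\*x)| = |D(\*y)|$ follows from the $j=0$ equation, and disjointness/non-triviality is inherited from $\*x \ne \*y$. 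The main obstacle is the middle step: making the "small on an arc $\Rightarrow$ derivatives vanish" quantitatively tight enough that the cutoff lands exactly at $k \le $ the value forced by $n = (k\log k)^{2/\varepsilon}$, which requires carefully matching the arc length, the $\exp(-\Omega(n^\varepsilon))$ sample-complexity bound, and the $n^{O(k)}$ coefficient/derivative bounds — and in particular being careful that the image curve near $z=1$ is smooth and well-parametrized so that bounds on $|A|$ along the curve transfer to bounds on $|A|$ (and its derivatives) in $z$.
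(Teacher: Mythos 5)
Your proposal follows the same overall route as the paper: hardness for mean-based algorithms forces $|Q_{\mathbf{x}}(w)-Q_{\mathbf{y}}(w)|$ to be exponentially small on the curve through $1$, which forces the order of vanishing of the difference polynomial at $w=1$ to exceed $k$, and vanishing up to order $k$ at $1$ is equivalent (via your falling-factorial argument, or the classical equivalence the paper cites) to the degree-$k$ PTE equations for $D(\mathbf{x}),D(\mathbf{y})$. The paper phrases this as a contrapositive and gets the middle implication by quoting its Theorem~\ref{thm:multiplicity-to-maximum}, whose proof factors $f(z)=(z-1)^k g(z)$, bounds the coefficients of $g$ by $n^{k+1}$ (Lemma~\ref{lem:factor-coef-bound}), and evaluates at a point of $\partial B(p;q)$ at distance about $n^{-(k+2)}$ from $1$ via the mean value theorem. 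You instead re-derive that step by interpolating at $k+1$ points of the arc and comparing divided differences to $A^{(j)}(1)$, exploiting that $A^{(j)}(1)$ is a nonzero integer; this is a legitimate alternative (and you are right to avoid Cauchy estimates, which would require control on a full contour rather than an arc), provided you add the Hermite--Genocchi-type error term $\sup|A^{(j+1)}|\cdot(\text{arc length})$ to pass from divided differences to the derivative at $1$.

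Two corrections are needed, neither fatal. First, your quantitative bookkeeping overshoots: with an arc of length $n^{-\varepsilon/2}$ the error $n^{j+2}\cdot(\text{arc length})$ swamps the integrality gain $1/j!$, so you must shrink the arc to length about $n^{-(k+2)}$, and the lower bound your method then yields is $n^{-O(k^2)}$, not $(\text{arc length})^k\,n^{-O(k)} = n^{-O(k)}$ (indeed the paper states only $n^{-O(k^2)}$ and poses improving it as an open problem). The slack in the parameters absorbs this: with $n=(k\log k)^{2/\varepsilon}$ one has $k^2\log n = O_\varepsilon(k^2\log k) = o\big((k\log k)^2\big) = o(n^{\varepsilon})$, which is exactly the check the paper performs, so the contradiction with the $\exp(\Omega(n^{\varepsilon}))$ hardness assumption still goes through. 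Second, the curve in the paper's reduction is $\partial B(p;q)$, i.e.\ $z=p+qw$ with $|w|=1$, coming from $P_{\mathbf{x}}(z)=q\,Q_{\mathbf{x}}(p+qz)$ (Lemma~\ref{lem:mean-to-complex}), not $z=qw/(1-pw)$; this is harmless for your argument (both curves pass through $1$ and lie in the closed unit disk), but you should match the paper's parametrization when invoking the mean-based-to-complex reduction.
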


We also prove a converse of this result. However, the converse is in terms of an upper bound on the magnitude of the solutions to the PTE problem, rather than the size of the solution. 

We note that the counting argument \cite{wright1935tarry, hua-book} that shows existential results for the PTE solutions in terms of size and degree,  in fact gives solutions in which the values of the integers are bounded from above by, say, an integer $M$. When the size of the solution is  $s=\Omega(k^3)$, the proof \cite{wright1935tarry, hua-book} shows that there exist solutions where $M$ is polynomial in $s$. When the size $s$ is exponential in the degree $k$, there are constructions with $M=O(s)$ \cite{wright-prouhetssol}. 
 Hence, the size of the solution and the magnitude of the solution lead to qualitatively similar bounds in interesting ranges of the parameters.
 
%In the converse direction, we obtain that solutions of bounded magnitude to the PTE problem imply strings that are hard-to-distinguish by mean-based algorithms. 

%Conversely, we also show that solutions to PTE systems imply strings %that are hard-to-distinguish for mean-based algorithms.

%of small ``size'' imply strings that are hard-to-distinguish for mean-based algorithms. Here the ``size'' of a solution refers to the maximum value among the integers of the solution sets.

\begin{restatable}{theorem}{PTEtohardstring} \label{thm:PTE-to-hardstring}
Suppose $A, B \subseteq \N$ form a solution to the degree-$k$ PTE system, and let $n \coloneqq \max A \cup B$. Define the following strings $\*x, \*y \in \set{0,1}^{n+1}$:
\begin{align*}
	\forall i \in \set{0,1,\dots,n}, \quad x_i = \begin{cases}
	0 & \textup{if $i \notin A$} \\
	1 & \textup{if $i \in A$}
	\end{cases},
	\quad y_i = \begin{cases}
	0 & \textup{if $i \notin B$} \\
	1 & \textup{if $i \in B$}
	\end{cases}.
\end{align*}
Then for any $\eps>0$, $n^{\Omega(k)}$ traces are necessary for mean-based algorithms to distinguish between $0^{\ell}\*x$ and $0^{\ell}\*y$, where $\ell=n^{3+\eps}$.
\end{restatable}
We remark that $n^{\Omega(k)} = N^{\Omega(k)}$ where $N=\ell+n=\Theta(n^{3+\eps})$ is the length of strings $0^{\ell}\*x$ and $0^{\ell}\*y$. Since $k\le n$ for any PTE solutions, the largest possible bound we could get via Theorem~\ref{thm:PTE-to-hardstring} is $N^{\Omega(N^{1/(3+\eps)})}=\exp(N^{1/(3+\eps)}\log N)$. This is consistent with the results of~\cite{de2019optimal, nazarov2017trace}, which showed that $\exp(N^{1/3})$ traces are sufficient for mean-based algorithms to distinguish between any two strings of length $N$. We also note that the hard strings obtained from general PTE solutions may have unbounded edit distance, thus they do not directly imply Theorem~\ref{thm:main-edist-four}. 

%We note that our results  in Theorem~\ref{thm:main-edist-four} only imply PTE solutions with tradeoffs that are not new (see Corollary~\ref{cor:exp-pte-sols}).
%However, 
The strong connection with the PTE problem, which is believed to be a difficult problem in Number Theory, may be interpreted as evidence to the difficulty of finding explicit hard-to-distinguish strings for the trace-reconstruction problem. Such hard instances could be desirable when, for example, one wants to design instance-dependent algorithms to bypass the ``mean-based barrier''. 
We remark that a similar reduction from the problem of finding small-size explicit solutions for the PTE problem to the computational hardness of the Bounded Distance Decoding problem for Reed-Solomon codes from \cite{GandikotaGG18} revealed a similar barrier for the respective decoding problem.

PTE systems appear to be intimately connected to the trace-reconstruction problem. Indeed, the analysis of mean-based algorithms often reduces to the study Littlewood-type polynomials, namely polynomials with $\{-1, 0, 1\}$ coefficients, on the complex unit circle. This in turn often involves understanding the multiplicity of the root $1$, which is again a question tightly related to the PTE problem (see discussion in Section \ref{sec:mult-of-one}).

Finally, with the tools established in this paper, we apply the Descartes rule of signs \cite{descartes1886geometrie} to complete the  proofs of some of our results, e.g., the proof of Theorem~\ref{thm:main-const-Hamming}. As another application of this rule to larger edit distances, we also obtain the following theorem, formalized in Section \ref{sec:high-edist}.

\begin{theorem} \label{thm:informal-special-higher-edist} (Informal)
Strings $x, y \in \{0,1\}^n$ with $\dE(x, y)=d\geq 1$ and certain special block structures are distinguishable by mean-based algorithms using $n^{O(d)}$ traces. In particular, the statement holds for every pair of strings at edit distance $2$.
\end{theorem}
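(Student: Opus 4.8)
The plan is to pass, via the standard characterization of mean-based distinguishers, to the difference polynomial $P(z) := \sum_{i=0}^{n-1}(x_i-y_i)\,z^i \in \mathbb{Z}[z]$, and to bound the multiplicity of its root at $z=1$ using the telescoping form of $\*x-\*y$ together with Descartes' rule of signs. Recall (Nazarov--Peres / De--O'Donnell--Servedio, as invoked for Theorem~\ref{thm:main-edist-four}) that for $p=1/2$ a mean-based algorithm distinguishes $\*x$ from $\*y$ in time $\mathrm{poly}(n)\cdot\gamma^{-2}$, where $\gamma := \max_{z\in\Gamma}\abs{P(z)}$ for a fixed arc $\Gamma$ of the unit circle through $z=1$. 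By the multiplicity-to-supremum estimate (Section~\ref{sec:mult-to-sup}) --- concretely, if $P(z)=(z-1)^m Q(z)$ then $Q\in\mathbb{Z}[z]$ with $\norm{Q}_\infty\le n^{O(m)}$ and $\abs{Q(1)}\ge 1$, so evaluating at $z=e^{i\eps}$ with $\eps = n^{-O(m)}$ small enough that $\abs{Q(e^{i\eps})-Q(1)}\le \tfrac12$ gives $\abs{P(e^{i\eps})}\ge(\tfrac12\abs{\eps})^m\cdot\tfrac12 = n^{-O(m^2)}$ --- it suffices to show that the root of $P$ at $z=1$ has multiplicity $m=O(d)$.

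For this I would use the edit structure. If $\*x$ and $\*y$ have unequal Hamming weights then $P(1)=\sum_i(x_i-y_i)\ne 0$, so $m=0$ and $\gamma\ge 1$; assume equal weights henceforth. Since $\dE(\*x,\*y)=d$, $\*y$ arises from $\*x$ by $O(d)$ elementary edits; the ``special block structure'' hypothesis is what ensures the coefficient sequence of $\*x-\*y$ has only $O(d)$ sign changes, the cleanest sufficient condition being that these edits are insertion-deletions (shifts of a block by one position) confined to pairwise disjoint coordinate blocks $B_1,\dots,B_t$ with $t=O(d)$. A direct computation using the telescoping identity $\sum_{s=0}^{m-1}(a_s-a_{s+1})z^s = a_0-a_m z^{m-1}+(z-1)\sum_{s=1}^{m-1}a_s z^{s-1}$ shows that such a block shift confined to $B_k=[i,j]$ contributes to $P$ exactly $z^{i}(z-1)S_k(z)$, where $S_k(z)=\sum_{s=0}^{j-i-1}(x_{i+s+1}-c)z^s$ and $c=x_i$ is the bit carried across $B_k$; hence $S_k$ has all coefficients in $\{0,1\}$ when $c=0$ and all in $\{-1,0\}$ when $c=1$ --- one-signed --- and $S_k\not\equiv 0$, since otherwise $\*x=\*y$ on $B_k$. (A weight-preserving matched pair of substitutions contributes $z^{a}-z^{b}=z^{\min(a,b)}(z-1)(1+\cdots+z^{\abs{a-b}-1})$, again $(z-1)$ times a one-signed run, and is absorbed similarly.) Summing, $P(z)=(z-1)\,T(z)$ with $T=\sum_k z^{c_k}S_k$; because the blocks are disjoint and each $S_k$ is one-signed, the coefficient sequence of $T$ has at most $t-1=O(d)$ sign changes, so Descartes' rule of signs bounds the number of positive real roots of $T$ with multiplicity --- in particular the multiplicity of $z=1$ --- by $O(d)$. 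Therefore $m\le 1+O(d)=O(d)$.

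Combining the two parts gives $\gamma\ge n^{-O(d^2)}$, hence a mean-based distinguisher running in time $n^{O(d^2)}$. For the final assertion: a pair at edit distance $2$ is either at Hamming distance $2$, where Theorem~\ref{thm:main-const-Hamming} applies, or differs by a single insertion-deletion --- a single block --- in which case $T=S_1$ is a nonzero one-signed polynomial, so $T(1)\ne 0$, $(z-1)\nmid T$, and $m=1$, yielding $\mathrm{poly}(n)$ traces, comfortably within the bound. I expect the main obstacle to be not any individual step but isolating the sharpest ``special block structure'' hypothesis: disjointness of the edited blocks is exactly what makes the one-signed contributions non-interfering so that Descartes delivers $O(d)$ sign changes, and once the blocks overlap (a block shifted by more than one position, a weight-changing edit paired with a far-away partner) the telescoping no longer controls the sign changes and the multiplicity bound can degrade; pinning down the most general hypothesis under which $m=O(d)$, and checking that the arc $\Gamma$ and the $p$-dependence agree with the version used elsewhere in the paper, are the delicate points.
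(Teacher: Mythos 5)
Your proposal is correct and follows essentially the same route as the paper: factor $Q_{\*x}-Q_{\*y}=(w-1)T(w)$ via the block/telescoping decomposition, count sign changes of $T$ using disjointness of the blocks, apply Descartes' rule to bound the multiplicity at $1$ by $O(d)$, and then invoke the multiplicity-to-supremum bound (Theorem~\ref{thm:multiplicity-to-maximum}) together with the mean-based reduction to get $n^{O(d^2)}$ traces; your treatment of unequal Hamming weights and of edit distance $2$ also matches the paper's. The one substantive difference is scope: your ``cleanest sufficient condition'' (disjoint one-position block shifts with the carried bit preserved) is the paper's warm-up case, whereas the formal Theorem~\ref{thm:special-higher-edist} additionally allows end bits $a_i\neq b_i$ that are matched across possibly distant blocks; the resulting run polynomials $J_i$ do straddle blocks, and the paper controls this exactly where you stop, by observing that the $J_i$'s are constant on block interiors so every sign change is cut by a block-boundary index, giving at most $3d-1$ sign changes --- so the interference you flag as the delicate point is handled there rather than degrading the bound. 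One minor point: the supremum should be taken (and the evaluation point chosen) on the shifted circle $\partial B(p;q)$ rather than on a unit-circle arc near $1$, but since you invoke the paper's Theorem~\ref{thm:multiplicity-to-maximum} rather than reproving it, this does not affect your argument.
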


\paragraph{This version.}
This version of our paper includes several improvements over our previous version \cite{arxiv-previous-version}. Some of these improvements are inspired by recent work of Sima and Bruck~\cite{sima2021trace}, which appeared after the previous version of our paper was published  online. In particular, here we improve Theorem~\ref{thm:multiplicity-of-one} due to a technical lemma in \cite{sima2021trace}, whose proof we simplify further in Lemma~\ref{lem:w-around-one}. We explain the differences between the proofs in Section \ref{subsec:techniques} after Theorem \ref{thm:multiplicity-of-one}.

%Theorem 5 is now \textbf{Theorem 7}, and the bound is improved from $n^{-O(k^2)}$ to $n^{-O(k)}$. We emphasize that the theorem concerns the $\ell_1$-distance between the mean traces, as opposed to the supremum of $|Q_{x}-Q_{y}|$ over $\partial B(p;q)$ in the initial draft. The proof is significantly simplified and is located in Section 3.
%The new proof is inspired by the work of Sima and Bruck~\cite{sima2021trace}. 
%-->In the Recent Work paragraph in Section 1.3, we made a discussion about how the new proof is different from our initial one and the one from~\cite{sima2021trace}.

Other changes in this version include strengthening and simplification of theorems \ref{thm:main-const-Hamming}, \ref{thm:hardstring-to-PTE}, \ref{thm:informal-special-higher-edist} and of Lemma~\ref{lem:mean-value}. In addition, Theorem~\ref{thm:PTE-to-hardstring} is a new theorem that is essentially the converse of Theorem~\ref{thm:hardstring-to-PTE}. This new theorem answers the open questions we raised in our previous version. We suggest new open problems in Section~\ref{sec:conclusion}.

\iffalse
\begin{enumerate}
\item Theorem~\ref{thm:PTE-to-hardstring} is a new theorem that strengthens the results from the initial draft. It is essentially the converse of Theorem \ref{thm:hardstring-to-PTE}.
%, namely solutions to PTE systems of small magnitude imply hard-to-distinguish strings for mean-based algorithms. Before the statement of Theorem 4 in Section 1.1, we discuss the connection between the size and magnitude of PTE solutions. The proof is located in Section 4.

%\item We added \textbf{Proposition 1} and \textbf{Theorem 6} in Section 1.2, which captures the reduction to complex analysis in~\cite{nazarov2017trace, de2019optimal}. They will be used to prove the upper and lower bounds, respectively. We added \textbf{Lemma 3} in the Preliminary (Section 2), which contains three useful bounds with short proofs.

\item The proof of Theorem \ref{thm:main-const-Hamming} is significantly simplified, and can be found in Appendix~\ref{sec:const-Hamming}.

%\item The assumption $p=1/2$ is removed from \textbf{Theorem 2}. The proof is in Section 5 and uses Lemma 3.

\item In Theorem~\ref{thm:hardstring-to-PTE}, the size $n=(k\log k)^{2/\varepsilon}$ is improved to $n=(k\log^2 k)^{1/\varepsilon}$. The proof (now in Section~\ref{subsec:PTE-connection}) is rewritten to improve readability. 

\item %Old Theorem 4 is now
Theorem~\ref{thm:informal-special-higher-edist} shows an improved bound from $n^{O(d^2)}$ to $n^{O(d)}$ compared to the respective theorem in the previous version.

%\item Theorem 7 is now \textbf{Theorem 9}. Its proof is simplified thanks to Lemma 3. Theorem 8 is now \textbf{Theorem 10}.

\item  Lemma~\ref{lem:mean-value} is a generalization of the respective previous lemma to analytic functions. 

\item The improved Theorem~\ref{thm:hardstring-to-PTE} along with the new Theorem~\ref{thm:PTE-to-hardstring} essentially answers the questions we raised in the Open Problems section in our previous version. We made several remarks at the end of Section~\ref{sec:edist-four} to point them out, and suggested new open problems in Section \ref{sec:conclusion}.
\end{enumerate}
\mnote{Suggested structure above. I can look and compress after you fill in details.}

\fi

\subsection{Our techniques} \label{subsec:techniques}

\paragraph{The \cite{de2019optimal, nazarov2017trace} reduction to complex analysis}  
We recall that a mean-based algorithm only works with ``mean traces'' of a string. Formally, the mean trace of string $\*x \in \set{0,1}^n$ is a vector $\*E(\*x)=\tp{E_0(\*x), \cdots, E_{n-1}(\*x)} \in [0,1]^n$, where the $j$-th coordinate is defined as 
\begin{align*}
	E_j(\*x) = \Exp_{\tilde{\*x} \sim \+D_{\*x}}\left[ \tilde{x}_j \right].
\end{align*}
It is not hard to see (e.g.,~\cite{HolensteinMPW08}) that understanding the sample complexity for distinguishing between $\*x$ and $\*y$ by mean-based algorithms essentially amounts to understanding the $\ell_1$-distance between the mean traces.

\begin{proposition} \label{prop:l1-dist}
Given strings $\*x, \*y \in \set{0,1}^n$ with $\norm{\*E(\*x)-\*E(\*y)}_{\ell_1} = \eps$, $\Omega(1/\eps)$ traces are necessary, and $O(1/\eps^2)$ traces are sufficient for mean-based algorithms to distinguish between $\*x$ and $\*y$.
\end{proposition}

Our techniques focus on analyzing the modulus of Littlewood polynomials with $\{-1, 0, 1\}$ coefficients in certain regions of the complex plane. The reduction to complex analysis was established in \cite{de2019optimal, nazarov2017trace}.
They define the associated polynomials
$ P_{\*x}(z) = \sum_{j=0}^{n-1} E_j(\*x) \cdot z^j$
and the related polynomial  $ Q_{\*x}(p+qz) = q^{-1}P_{\*x}(z)= \sum_{k=0}^{n-1}x_k \cdot \tp{p+qz}^k$ (and hence $  Q_{\*x}(z)=\sum_{k=0}^{n-1}x_k \cdot z^k$), which is obtained from writing the $E_j$'s explicitly as 
\begin{align*}
E_j(\*x) = \Exp_{\tilde{\*x} \sim \+D_{\*x}}\left[ \tilde{x}_j \right] = \sum_{k=0}^{n-1}\Pr\left[\tilde{x}_j\textup{ comes from }x_k\right] \cdot x_k = \sum_{k=0}^{n-1}\binom{k}{j}p^{k-j}q^{j+1} \cdot x_k, 
\end{align*}
Here $p$ is the deletion probability and $q=1-p$. The reduction is summarized in the following theorem.

\begin{theorem}[\cite{de2019optimal, nazarov2017trace}] \label{thm:reduction-to-comp}
The sample complexity of mean-based algorithms for the trace-distinguishing problem for strings $\*x$ and $\*y$ is lower bounded by (up to constants) the inverse of 
\[
   \sup \set{\abs{Q_{\*x}(w) - Q_{\*y}(w)} \colon w \in \partial B\tp{p; q}}, 
\]
where  $\partial B\tp{p; q}$ denotes the circle of radius $q$ centered at $p$ in the complex plane.
\end{theorem}

For completeness, we include the details of the reduction in Appendix \ref{sec:reduction}. Note that all coefficients of $Q_{\*x}(w) - Q_{\*y}(w)$ belong to $\{-1, 0, 1\}.$ 

\paragraph{Applications of Descartes' Rule of Signs}
Here we relate the $\ell_1$-distance between the mean traces of $\*x$ and $\*y$ to the multiplicity of zero of the polynomial $Q_{\*x}-Q_{\*y}$ at 1. Specifically, we show that as long as 1 is a root with multiplicity no more than $k$, the $\ell_1$-distance is at least $n^{-O(k)}$ (assuming the deletion probability $p$ is a constant).
%(See  Theorem~\ref{thm:multiplicity-to-maximum})

\begin{restatable}{theorem}{multiplicityofone} \label{thm:multiplicity-of-one}
Let $\*x, \*y \in \set{0,1}^n$ be two distinct strings. Suppose the polynomial $f(z)=Q_{\*x}(z)-Q_{\*y}(z)$ has $k$ roots at $z=1$. Then for any deletion probability $p \in (0,1)$, we have
\begin{align*}
    \norm{\*E(\*x) - \*E(\*y)}_{\ell_1} \ge \frac{q}{e}\tp{\frac{q}{n}}^k.
\end{align*}
Here $q=1-p$ is the retention probability.
\end{restatable}

Our proof of Theorem~\ref{thm:multiplicity-of-one} is inspired by the proof of Lemma 6 in~\cite{sima2021trace}. The proof presented here is arguably  simpler than the one that appeared in a preliminary version of this paper, and the bound is also improved from $n^{-O(k^2)}$ to $n^{-O(k)}$. The new idea of~\cite{sima2021trace} was to find a point in the complex plane with nice properties on a circle \emph{centered at} $1$, whereas our initial proof revolved around finding such a point on a circle \emph{touching} $1$. Their analysis uses an averaging argument, which we  simplify to an application of the Maximum Modulus Principle in our new proof (see Lemma~\ref{lem:w-around-one} in Section~\ref{sec:mult-of-one}). 

\iffalse
\begin{restatable}{theorem}{multiplicity} \label{thm:multiplicity-to-maximum} 
Let $f(z)$ be a polynomial of degree $n$ with coefficients  in $\set{-1,0,1}$. Suppose $z=1$ is a root of $f(z)$ with multiplicity $k$. Let $0<p<1$ and $q=1-p$ be constants. Then 
\begin{align*}
    \sup\set{\abs{f(z)} \colon z \in \partial B\tp{p; q}} \ge \frac{1}{2}(12n^{k+2})^{-k}=n^{-O(k^2)}.
\end{align*}
\end{restatable} 
\fi

It is then desirable to upper bound the multiplicity of zero at 1 for various polynomials. Descartes' rule of sign changes provides a convenient tool to achieve this.

\begin{lemma}[\cite{descartes1886geometrie}, Theorem 36, Chapter 1, Part Five of~\cite{polya1997problems}] \label{lem:Descartes}
Let $Z(p)$ be the number of real positive roots of the real polynomial $p(x)$ (counting with multiplicity) and $C(p)$ the number of changes of sign of the sequence of its coefficients. We then have $C(p) \ge Z(p)$.
\end{lemma}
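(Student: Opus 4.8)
The plan is to prove Lemma~\ref{lem:Descartes} by the classical route that combines Rolle's theorem with a parity count. Write $p(x)=\sum_{j=0}^{n}a_j x^j$. First I would reduce to the case $a_0\neq 0$ and (after multiplying by $\pm 1$) $a_n>0$: writing $p(x)=x^{\ell}\tilde p(x)$ with $\tilde p(0)\neq 0$, stripping the factor $x^{\ell}$ and rescaling by $\pm 1$ leave both $Z$ and $C$ unchanged, so it suffices to treat $\tilde p$.

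The crux is the parity identity $Z(p)\equiv C(p)\pmod 2$. Factor $p$ over $\mathbb{C}$: pairs of non-real roots contribute factors $(x-z)(x-\bar z)=|x-z|^2>0$ on $\mathbb{R}$, and a factor $(x-r)^{\mu}$ with $r\le 0$ is positive for $x>0$, so for $x>0$ one has $\operatorname{sign}\big(p(x)\big)=\operatorname{sign}(a_n)\cdot\prod_{r_i>0}\operatorname{sign}\big((x-r_i)^{\mu_i}\big)$, the product taken over the positive roots $r_i$ with multiplicity $\mu_i$. Letting $x\to 0^{+}$ (all $x-r_i<0$) gives $\operatorname{sign}(a_0)=\operatorname{sign}(a_n)\cdot(-1)^{Z(p)}$, so $a_0$ and $a_n$ share a sign iff $Z(p)$ is even. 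But the sign sequence of the nonzero coefficients starts at $\operatorname{sign}(a_0)$ and ends at $\operatorname{sign}(a_n)$, so $C(p)$ is even iff these agree; hence $Z(p)\equiv C(p)\pmod 2$.

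Next I would show $Z(p)\le C(p)+1$ by strong induction on $n=\deg p$; together with the parity identity this gives $Z(p)\le C(p)$. The case $n=0$ is clear. For the step, consider $p'$, of degree $n-1$: its coefficient sequence $(1\cdot a_1,\dots,n\cdot a_n)$ has the same signs as $(a_1,\dots,a_n)$, and since prepending a term to a sequence cannot decrease its number of sign changes, $C(p')=C(a_1,\dots,a_n)\le C(a_0,\dots,a_n)=C(p)$. By the generalized Rolle theorem, if the distinct positive roots of $p$ are $s_1<\dots<s_t$ with multiplicities $\mu_i$ (so $Z(p)=\sum_i\mu_i$), then $p'$ vanishes at each $s_i$ to order at least $\mu_i-1$ and, by Rolle, has a further root in each $(s_i,s_{i+1})$; these $\sum_i(\mu_i-1)+(t-1)=Z(p)-1$ roots are distinct and positive, so $Z(p')\ge Z(p)-1$. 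Applying the induction hypothesis to $p'$ yields $Z(p)\le Z(p')+1\le C(p')+1\le C(p)+1$, and the parity identity upgrades this to $Z(p)\le C(p)$.

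The only subtle points are the parity identity, where one must carefully account for the contributions of all roots on $(0,\infty)$ (in particular, verifying that non-real and non-positive roots contribute nothing), and the multiplicity bookkeeping in the generalized Rolle step. I expect the parity argument to be the main conceptual obstacle: the naive derivative induction only delivers the off-by-one bound $Z(p)\le C(p)+1$, and it is precisely the parity congruence that closes the gap.
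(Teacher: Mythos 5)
Your proof is essentially correct, but note that the paper does not prove Lemma~\ref{lem:Descartes} at all: Descartes' rule of signs is invoked as a classical black box, cited to Descartes and to Theorem~36, Chapter~1, Part~Five of P\'olya--Szeg\H{o}, and then applied (e.g.\ in Corollary~\ref{cor:weak-const-Hamming} and Theorem~\ref{thm:special-higher-edist}) to bound the multiplicity of the root $1$. What you have supplied is the standard textbook proof --- reduce to $a_0\neq 0$, establish the parity congruence $Z(p)\equiv C(p)\pmod 2$ by factoring over $\mathbb{C}$ and comparing $\operatorname{sign}(a_0)$ with $\operatorname{sign}(a_n)$, and combine it with the Rolle/derivative induction --- and all the individual steps (the sign bookkeeping for non-real and non-positive roots, $C(p')\le C(p)$, and $Z(p')\ge Z(p)-1$ via multiplicities plus intermediate roots) are sound. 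One small tightening: as phrased, you say the induction proves the weaker bound $Z\le C+1$, but your displayed chain $Z(p)\le Z(p')+1\le C(p')+1\le C(p)+1$ actually uses $Z(p')\le C(p')$, i.e.\ the full statement for the smaller degree. If the inductive hypothesis were literally $Z(p')\le C(p')+1$ you would only get $Z(p)\le C(p)+2$, and parity cannot rule out $Z(p)=C(p)+2$. So state the induction as proving $Z\le C$ itself, with the parity upgrade applied inside each inductive step; with that rephrasing the argument is complete and matches the classical proof the paper is citing.
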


We note that prior work that we are aware of on understanding the structure of polynomials with many roots at $1$ (e.g., \cite{erdelyi2014coppersmith, erdelyi2019multiplicity}) do not appear to imply our bounds on the complex unit circle.

\paragraph{Remark} If $p(x)=a_0+a_1x+a_2x^2+ \ldots +a_nx^n$ is a polynomial, we say a pair $(i, j)$ ($0 \le i < j \le n$) is a \emph{sign change} if $a_ia_j < 0$ and $a_{i+1} = a_{i+2} = \ldots = a_{j-1} = 0$. $C(p)$ exactly counts the number of such pairs $(i,j)$.

We use this rule to prove the formal version of Theorem \ref{thm:informal-special-higher-edist}, namely  Theorem~\ref{thm:special-higher-edist}. We also use it to give a simple proof of Theorem~\ref{thm:main-const-Hamming}.

\paragraph{Complex analysis over shifted circles}
%A main stepping stone in our derivation of Theorem~\ref{thm:main-const-Hamming} and Theorem~\ref{thm:informal-special-higher-edist} is that low multiplicity of the root $w=1$ implies large supremum over the shifted complex circle (i.e. Theorem~\ref{thm:multiplicity-to-maximum}). The idea is to find a point on the shifted circle that has a very small (but carefully chosen) distance to $1$, and argue that the polynomial has large modulus at that point. The analysis is mostly elementary, including manipulations and estimations of trigonometric functions, and deriving bounds on the coefficients and derivatives of certain polynomials.
For the negative results (i.e., Theorem~\ref{thm:main-edist-four} and~\ref{thm:PTE-to-hardstring}), our strategy is to apply Theorem~\ref{thm:reduction-to-comp} and analyze the supremum of $|Q_{\*x}-Q_{\*y}|$. For Theorem~\ref{thm:PTE-to-hardstring}, we upper bound how much the modulus of an analytic function could change in a small neighbourhood of 1 by controlling its derivative. For Theorem~\ref{thm:main-edist-four}, the strongest barrier is that it is unclear how to control the edit distance. This seems even more difficult for non-constructive arguments such as the ones in~\cite{borwein1997littlewood} and~\cite{wright1935tarry}. Our construction is inspired by properties of product of cyclotomic polynomials and their relation to PTE solutions with special structures.

\subsection{Related work}

The first formulations of the problem were proposed by \cite{Levenshtein01a, Levenshtein01b}, and the precise formulation we study here were developed in \cite{BatuKKM04, HolensteinMPW08}, motivated by the connection with DNA reconstruction. DNA sequencing recently motivated the model of ``coded'' trace reconstruction \cite{cheraghchi2020coded, brakensiek2020coded}, in which the goal is to reconstruct codewords of a known code, rather than an arbitrary string. Furthermore, the worst-case trace-reconstruction problem was also studied in the memoryless replication-insertion channel \cite{cheraghchi2021mean}.
%These papers show almost optimal such construction of codes with very fast reconstruction, in some cases even using constantly many traces.

Besides the worst-case trace reconstruction discussed earlier in the introduction, a well-studied variant has been the average-case trace-reconstruction problem, studied in \cite{HolensteinMPW08, PeresZ17, McGregorPV14, HoldenPP18}. Here, the best current lower bound is $\Omega(\log^{5/2} n)$ \cite{HoldenL18, chase2021new}, and the best algorithms run in time $\exp\tp{O(\log^{1/3} n)}$. 

A recent intriguing result \cite{chen2020polynomialtime} considers the smooth variant, which is an intermediate model between the worst-case and the average-case models. In the smooth model, the initial string is obtain from an arbitrary worst-case string perturbed so that each coordinate is replaced by a uniformly random bit with some constant probability $0<\sigma<1$. In \cite{chen2020polynomialtime}, the authors show that in this case reconstruction can be done efficiently. 

%\cite{chen2020polynomialtime} shows that reconstruction can be done efficiently when receiving traces from arbitrary worst-case strings that are perturbed in such a way that each coordinate is replaced by a uniformly random bit with constant probability $0<\sigma<1$. 

Other variants consider string reconstruction from the multiset of substrings \cite{GabrysM17, GabrysM19}, population recovery variants \cite{FanCFSS19}, matrix reconstruction and parametrized algorithms \cite{KrishnamurthyM019}, and circular trace reconstruction \cite{narayanan2021circular}.

%\subsection{Subsequent work}

%a number of subsequent results appeared. emerged in contribution to various aspects of the trace reconstruction problem. Here we present some of them which may be of interest to the readers of this paper.

\paragraph{Recent Work.} After a preliminary version of this paper was published, Davies, R\'acz, Rashtchian and Schiffer considered a relaxed problem named \emph{approximate trace reconstruction} and provided efficient algorithms for several classes of strings~\cite{davies2021approximate}. Here the goal is to recover a string that is close to the true source string in edit distance. Soon after, \cite{chen2021near},~\cite{chase2021approximate} and~\cite{chakraborty2021approximate} showed that for random source strings an approximate solution can be found with high probability using very few traces. 
We remark that approximate solutions serve as distinguishers for pairs of strings (in the specified class) that are sufficiently far from each other in edit distance. On the other hand, for distinguishing strings that are close to each other, Sima and Bruck showed that $n^{O(d)}$ traces are also sufficient,  where $d$ is their edit distance~\cite{sima2021trace}.
Of course, the algorithm proposed in~\cite{sima2021trace} is not mean-based (otherwise it would contradict Theorem~\ref{thm:main-edist-four}). Nevertheless, the analysis, to a large extent, resembles that for mean-based algorithms. 
%Indeed, as a technical contribution, they improved the bound in Theorem~\ref{thm:multiplicity-to-maximum} from $n^{-O(k^2)}$ to $n^{-O(k)}$ (cf. Lemma 6 of~\cite{sima2021trace}). 
Indeed, one of their technical contributions is improving one of our technical lemma which relates mean-based trace reconstruction and the multiplicity of zeros of certain polynomials. 
%This makes progress towards answering the problems raised in Section~\ref{sec:conclusion}. Thanks to this improvement, the upper bound in Theorem~\ref{thm:informal-special-higher-edist} can also be improved to $n^{O(d)}$.

\subsection{Organization of the paper}
In Section~\ref{sec:preliminaries} we develop the necessary notations and basic facts. In Section~\ref{sec:mult-of-one} we prove Theorem~\ref{thm:multiplicity-of-one}, which is a key factor in our analysis, and use it to prove Theorem~\ref{thm:hardstring-to-PTE}. In Appendix~\ref{sec:const-Hamming} we prove Theorem~\ref{thm:main-const-Hamming}. In Section~\ref{sec:pte-to-strings} we prove Theorem~\ref{thm:PTE-to-hardstring}. In Section~\ref{sec:edist-four} we prove Theorem~\ref{thm:concrete-edist-four}, which is a more concrete version of Theorem~\ref{thm:main-edist-four}. In Section~\ref{sec:high-edist} we prove Theorem~\ref{thm:special-higher-edist}, which is an equivalent and more concrete version of Theorem~\ref{thm:informal-special-higher-edist}. In the Appendix, we also explain how to reduce the analysis of mean-based algorithms to understanding the supremum of certain polynomials over a circle in the complex plane.

%grammar till here

%\mnote{See commented out alternate title}\enote{changed}

\section{Preliminaries} \label{sec:preliminaries}
Given $z \in \mathbb{C}$ and $r \in \mathbb{R}_{\ge 0}$, we write 
\begin{align*}
    B(z; r) \coloneqq \set{w \in \mathbb{C} \colon |w-z| \le r}
\end{align*}
for the disk centered at $z$ with radius $r$, and write $\partial B(z; r)$ for its boundary.

Let $p(w) = a_0 + a_1 w+ \ldots + a_n w^n$ be a polynomial where $a_j \in \mathbb{C}$. Let $A \subseteq \mathbb{C}$ be a set. We define the following norms.
\begin{align*}
\norm{p}_1 = \sum_{j=0}^{n}|a_j|, \quad \norm{p}_2 = \tp{\sum_{j=0}^{n}\abs{a_j}^2}^{1/2}, \quad \norm{p}_A = \sup_{w \in A}|p(w)|.
\end{align*}
When $A = \partial B(0; 1)$ is the complex unit circle, we also write $\norm{p}_A = \norm{p}_\infty$. These norms are connected by the following inequalities.
\begin{lemma} \label{lem:norm-ineq}
Let $p$ be a degree-$n$ polynomial with real coefficients. Then
\begin{align*}
    \frac{1}{\sqrt{n+1}} \cdot \norm{p}_1 \le \norm{p}_2 \le \norm{p}_\infty \le \norm{p}_1.
\end{align*}
\end{lemma}
\begin{proof}
The first and third inequalities are applications of Cauchy-Schwartz and the triangle inequality, respectively. The second inequality comes from the following identity
\begin{align*}
\norm{p}_2^2 = \frac{1}{2\pi}\int_{0}^{2\pi}\abs{p\tp{e^{i\theta}}}^2 d\theta,
\end{align*}
where the right-hand-side is clearly upper bounded by $\norm{p}_{\infty}^2$.
\end{proof}

We will use the following bounds for a point $z \in \partial B(p;q)$.
\begin{lemma} \label{lem:trig-bounds}
Fix $p \in (0,1)$ and $q=1-p$. Let $z =p+qe^{i\theta}$ where $\theta \in (-\pi, \pi]$. The following bounds hold.
\begin{enumerate}
\item $|z|\le 1-2pq(\theta/\pi)^2$.
\item $|z-1|\le q|\theta|$.
\item For any integer $d\ge 0$, $|z^d-1|\le dq|\theta|$.
\end{enumerate}
\end{lemma}
\begin{proof}
\textit{Item 1:} By convexity of $\sin(\cdot)$ over $[0,\pi/2]$, we have 
\begin{align*}
1-\cos x = 2\sin^2\frac{x}{2} \ge 2\tp{\tp{1-\frac{x}{\pi}}\cdot\sin 0 + \frac{x}{\pi}\cdot\sin\frac{\pi}{2}}^2 = 2\tp{\frac{x}{\pi}}^2
\end{align*} 
for $x \in [0,\pi]$. Thus we have 
\begin{align*}
|w| &= \sqrt{\tp{p+q\cos\theta}^2+\tp{q\sin\theta}^2} \\
&= \sqrt{p^2+2pq\cos\theta+q^2} \\
&= \sqrt{(p+q)^2 - 2pq(1-\cos\theta)} \\
&\le \sqrt{1 - 4pq\tp{\frac{\theta}{\pi}}^2} \\
&\le 1-2pq\tp{\frac{\theta}{\pi}}^2,
\end{align*}
where the last line is due to $(1+x)^r \le 1 + rx$ for $r \in [0,1]$ and $x \ge -1$ (Bernoulli's inequality).

\textit{Item 2:} By elementary identities for trigonometric functions, we have
\begin{align*}
\abs{e^{i\theta}-1}=\abs{\cos\theta-1+i\sin\theta}=\abs{2\sin\frac{\theta}{2}}\cdot\abs{-\sin\frac{\theta}{2}+i\cos\frac{\theta}{2}}=2\sin\frac{|\theta|}{2}.
\end{align*}
Therefore
\begin{align*}
	|z-1| &= \abs{p+qe^{i\theta}-1} = q\abs{e^{i\theta}-1} = q\cdot 2\sin\frac{|\theta|}{2} \le q|\theta|.
\end{align*}
The inequality is due to $\sin x \le x$ for $x\ge 0$.

\textit{Item 3:} Due to Item 1 and the triangle inequality, we have
\begin{align*}
	\abs{z^d-1} = |z-1| \cdot \abs{\sum_{j=0}^{d-1}z^j} \le q|\theta| \cdot \sum_{j=0}^{d-1}|z|^j \le dq|\theta|.
\end{align*}
\end{proof}

In this paper, ``with high probability'' means with probability at least $2/3$. We will use $p$ for the deletion probability and $q=1-p$. In this paper $p$ and $q$ will be constants. Given a string $\*a \in \set{0,1}^n$, a trace $\tilde{\*a} \in \set{0,1}^{\le n}$ is a subsequence of $\*a$ obtained by deleting each bit of $\*a$ independently with probability $p$. The length of $\tilde{\*a}$ is denoted by $|\tilde{\*a}|$. For $0 \le j \le n-1$, the $j$-th bit of $\*a$ and $\tilde{\*a}$ are written as $a_j$ and $\tilde{a}_j$, respectively. The distribution of $\tilde{\*a}$ is denoted by $\+D_{\*a}$. We also associate to $\*a$ the following polynomial
\begin{align*}
Q_{\*a}(w) \coloneqq a_0 + a_1w + a_2w^2 + \ldots + a_{n-1}w^{n-1}. 
\end{align*}
The degree of $Q_{\*a}$ is at most $n-1$. 

For strings $\*x, \*y \in \set{0,1}^n$, we will write $\dH(x,y)$ for the Hamming distance between $\*x$ and $\*y$,
 where $\dH(\*x,\*y)=| \{i\in [n] \colon x_i\ne y_i\}|$; 
and write $\dE(\*x, \*y)$ for the edit distance between $\*x$ and $\*y$, namely the minimum number of insertions and deletions that transform $\*x$ into $\*y$.

\section{Large $\ell_1$-distance between mean traces from low multiplicity of root 1} \label{sec:mult-of-one}
In this section we prove Theorem~\ref{thm:multiplicity-of-one}, which will be a key stepping stone to obtain our main results. We need the following lemma, which finds a point $w$ in the neighbourhood of 1 with nice properties. This lemma is  first proven in~\cite{sima2021trace} with $p\le 1/2$, and here we give a simpler proof which works for any $p\in (0,1)$.
\begin{lemma} \label{lem:w-around-one}
Let $f(z)$ be a polynomial of degree $n$. Suppose we can write 
\begin{align*}
	f(z) = (z-1)^k g(z)
\end{align*}
for some polynomial $g$ with $|g(1)| \ge 1$. Then for any $p\in(0,1)$ and $q=1-p$, there exists $w \in \mathbb{C}$ such that $|(w-p)/q|^n\le e$ and $|f(w)| \ge (q/n)^k$.
\end{lemma}
\begin{proof}
Let $\Gamma=B(1;q/n)$ denote the closed disk with radius $q/n$ centered at $1$ on the complex plane. By the Maximum Modulus Principle (see, e.g., Theorem 1.3 in Chapter III, \S 1 of~\cite{lang2013complex}), there exists a point $w \in \partial \Gamma$ such that 
\begin{align*}
	|g(w)| = \sup_{z \in \Gamma}|g(z)| \ge |g(1)| \ge 1.
\end{align*}
We denote $w_0\coloneqq w-1$. Therefore $|w_0|=q/n$, and 
\begin{align*}
	\abs{\frac{w-p}{q}}^n = \abs{\frac{1+w_0-p}{q}}^n = \abs{\frac{q+w_0}{q}}^n \le \tp{1+ \frac{|w_0|}{q}}^n = \tp{1+\frac{1}{n}}^n \le e. 
\end{align*}
Finally, we also have
\begin{align*}
	\abs{f(w)} = |w-1|^k \cdot |g(w)| \ge (q/n)^k.
\end{align*}
\end{proof}

Now we can prove Theorem~\ref{thm:multiplicity-of-one}. We recall the statement below.
\multiplicityofone*
\begin{proof}
We recall the definition of $P_{\*x}$:
\begin{align*}
    P_{\*x}(z) \coloneqq \sum_{j=0}^{n-1}E_j(\*x) \cdot z^{j}.
\end{align*}
The following identity is proven in~\cite{de2019optimal, nazarov2017trace} (see Appendix~\ref{sec:reduction} for a proof):
\begin{align}
    P_{\*x}\tp{\frac{w-p}{q}} = q\cdot Q_{\*x}(w).
    \label{eqn:comb-id}
\end{align}
Since $f(z)=Q_{\*x}(z)-Q_{\*y}(z)$ is a polynomial of degree $n$ with $k$ roots at $z=1$, we can write $f(z)=(z-1)^kg(z)$ for some polynomial $g$ such that $g(1)\neq 0$. We can also conclude that $|g(1)|\ge 1$ since $g$ has integer coefficients (to see this, consider $g(z+1)=f(z+1)/z^k$). Therefore, we can apply Lemma~\ref{lem:w-around-one} to $f$ and obtain $w$ such that $|(w-p)/q|^n\le e$ and $|f(w)|\ge(q/n)^k$. By the triangle inequality, we have
\begin{align*}
    \abs{P_{\*x}\tp{\frac{w-p}{q}}-P_{\*y}\tp{\frac{w-p}{q}}} = \abs{\sum_{j=0}^{n-1}\tp{E_j(\*x)-E_j(\*y)}\cdot \tp{\frac{w-p}{q}}^j} \le e\cdot \sum_{j=0}^{n-1}\abs{E_j(\*x)-E_j(\*y)}.
\end{align*}
On the other hand, equation (\ref{eqn:comb-id}) gives
\begin{align*}
\abs{P_{\*x}\tp{\frac{w-p}{q}}-P_{\*y}\tp{\frac{w-p}{q}}} = q\abs{Q_{\*x}(w) - Q_{\*y}(w)} \ge q\tp{\frac{q}{n}}^k.
\end{align*}
Putting everything together, we have obtained
\begin{align*}
    \sum_{j=0}^{n-1}\abs{E_j(\*x)-E_j(\*y)} \ge \frac{q}{e}\tp{\frac{q}{n}}^k.
\end{align*}
\end{proof}

\subsection{Connection to the Prouhet-Tarry-Escott problem} \label{subsec:PTE-connection}
The following is a classical statement about the PTE problem.
\begin{theorem}[e.g. \cite{BorweinIngalls}, Proposition 1] \label{thm:PTE}
Given $s, k\in \N$ and for $\alpha_i, \beta_i\in \N$, with $i\in [s]$, the following are equivalent:
\begin{itemize}
    \item $\sum_{i=1}^s \alpha_i^j=\sum_{i=1}^{s} \beta_i^j$, for $1\leq j\leq k$, and $\sum_{i=1}^s \alpha_i^{k+1}\neq\sum_{i=1}^{s} \beta_i^{k+1}$.
    \item  $\sum_{i=1}^s x^{\alpha_i} -\sum_{i=1}^s x^{\beta_i}=(x-1)^{k+1} q(x)$ where $q\in \mathbb{Z}[x]$ and $q(1)\neq 0.$ 
\end{itemize}
\end{theorem}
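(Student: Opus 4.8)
The plan is to observe that both bullets are merely two descriptions of a single quantity: the multiplicity of the root $x = 1$ of the integer polynomial
\[
F(x) \;=\; \sum_{i=1}^s x^{\alpha_i} - \sum_{i=1}^s x^{\beta_i} \;\in\; \mathbb{Z}[x].
\]
Write $m = \mathrm{ord}_{x=1} F$ for this multiplicity (with $m = \infty$ if $F \equiv 0$). First I would note that the second bullet is literally the statement $m = k+1$: if $F = (x-1)^{k+1} q(x)$ with $q(1) \neq 0$ then $m = k+1$, and conversely if $m = k+1$ then dividing $F$ by the monic polynomial $(x-1)^{k+1}$ — an operation that stays within $\mathbb{Z}[x]$ because $F \in \mathbb{Z}[x]$ — produces a quotient $q \in \mathbb{Z}[x]$ with $q(1) \neq 0$. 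So it remains to show that the first bullet is also equivalent to $m = k+1$.

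For that, the key step is the identity
\[
m \;=\; \min\Bigl\{ j \ge 0 \;:\; \textstyle\sum_{i=1}^s \alpha_i^j \neq \sum_{i=1}^s \beta_i^j \Bigr\},
\]
relating the order of vanishing at $1$ to the first power sum on which the two tuples disagree. To prove it I would substitute $x = e^t$ and compare two expansions of the analytic function $F(e^t)$ near $t = 0$. Expanding each exponential,
\[
F(e^t) \;=\; \sum_{i=1}^s e^{\alpha_i t} - \sum_{i=1}^s e^{\beta_i t} \;=\; \sum_{j \ge 0} \frac{t^j}{j!}\Bigl( \sum_{i=1}^s \alpha_i^j - \sum_{i=1}^s \beta_i^j \Bigr),
\]
so the order of vanishing of $F(e^t)$ at $t = 0$ equals the least $j$ with $\sum_i \alpha_i^j \neq \sum_i \beta_i^j$. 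On the other hand, writing $F(x) = (x-1)^m G(x)$ with $G \in \mathbb{Z}[x]$, $G(1) \neq 0$, we get $F(e^t) = (e^t - 1)^m G(e^t) = t^m \cdot \bigl( (e^t - 1)/t \bigr)^m G(e^t)$, whose last factor is analytic and takes the nonzero value $G(1)$ at $t = 0$; hence $F(e^t)$ vanishes to order exactly $m$. Comparing the two computations gives the displayed identity. (Alternatively, to keep everything over $\mathbb{Z}$ one can use the Euler operator $\theta = x\frac{d}{dx}$, for which $(\theta^j F)(1) = \sum_i \alpha_i^j - \sum_i \beta_i^j$, together with the triangular Stirling-number change of basis between $\{\theta^j\}$ and $\{x^j D^j\}$ to see that $(\theta^j F)(1) = 0$ for $j < m$ while $(\theta^m F)(1) \neq 0$.)

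Finally I would assemble the pieces. Since $\sum_i \alpha_i^0 = s = \sum_i \beta_i^0$ always holds, the condition ``$\sum_i \alpha_i^j = \sum_i \beta_i^j$ for $1 \le j \le k$ and $\sum_i \alpha_i^{k+1} \neq \sum_i \beta_i^{k+1}$'' says exactly that the first index $j \ge 0$ at which the power sums differ is $j = k+1$, i.e.\ $m = k+1$ by the identity above. Combined with the first paragraph, both bullets are equivalent to $m = k+1$, hence to each other. I expect the only delicate point to be the matching of polynomial divisibility by $(x-1)^m$ with the order of vanishing of $F(e^t)$ at $t = 0$ — this is elementary but must be handled cleanly, since it is precisely what licenses passing between the ``algebraic'' description (bullet two) and the ``moment'' description (bullet one); everything else is bookkeeping. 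One should also note in passing that the statement implicitly assumes the $\alpha_i, \beta_i$ are nonnegative integers (as they are in our application, being subsets of $[n]$) so that $F$ is a genuine polynomial; otherwise the same argument runs verbatim with Laurent polynomials and $\mathrm{ord}_{x=1}$.
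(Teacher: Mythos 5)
Your proof is correct. Note that the paper does not actually prove this statement---it is quoted as a classical fact with a pointer to the Borwein--Ingalls survey---so there is no internal proof to compare against; your argument is essentially the standard one from that literature. The key identity, that the multiplicity $m$ of the root $x=1$ of $F(x)=\sum_i x^{\alpha_i}-\sum_i x^{\beta_i}$ equals the first index $j$ at which the power sums $\sum_i\alpha_i^j$ and $\sum_i\beta_i^j$ disagree, is exactly how the equivalence is usually established, whether via the substitution $x=e^t$ as in your main argument or via the Euler operator $\theta=x\frac{d}{dx}$ and the triangular Stirling change of basis as in your parenthetical remark. You also handled the two points that are easy to gloss over: that the quotient by the monic polynomial $(x-1)^{k+1}$ stays in $\mathbb{Z}[x]$, and that the $j=0$ power sums always agree (both equal $s$), so the first bullet is precisely the statement $m=k+1$. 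The degenerate case $F\equiv 0$ is consistent as well, since then both bullets fail. No gaps.
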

This connection allows us to prove Theorem~\ref{thm:hardstring-to-PTE}.

\hardstringtoPTE*
\begin{proof}
Denote by $m$ the multiplicity of root $1$ of $Q_{\*x}-Q_{\*y}$. We consider two cases.

\textit{Case 1: $m \ge k+1$.} Let $\alpha_1, \alpha_2,\dots, \alpha_s$ enumerate the set $D(\*x)$ where $s \le n$ is the cardinality of $D(\*x)$. Similarly, we also let $\beta_1, \beta_2, \dots, \beta_s$ enumerate $D(\*y)$. Note that $D(\*x)$ and $D(\*y)$ must have the same cardinality since otherwise $\*x$ and $\*y$ have different Hamming weights (and thus are distinguishable using constant traces). We have
\begin{align*}
	\sum_{i=1}^{s}x^{\alpha_i} - \sum_{i=1}^{s}x^{\beta_i} = Q_{\*x}(w) - Q_{\*y}(w) = (x-1)^{m}q(x)
\end{align*}
for some $q \in \mathbb{Z}[x]$, $q(1) \neq 0$. Therefore, Theorem~\ref{thm:PTE} implies that $D(\*x)$ and $D(\*y)$ form a solution to the degree-$(m-1)$ PTE system. In particular, they form a solution to the degree-$k$ PTE system since $m-1\ge k$.

\textit{Case 2: $m \le k$.} We will show by contradiction that this case never occurs. Otherwise, Theorem~\ref{thm:multiplicity-of-one} gives us
\begin{align*}
\sum_{j=0}^{n-1}\abs{\Exp_{\tilde{\*x}\sim\+D_{\*x}}[\tilde{x}_j] - \Exp_{\tilde{\*y}\sim\+D_{\*y}}[\tilde{y}_j]} \ge \frac{q}{e}\tp{\frac{q}{n}}^{m} \ge \frac{q}{e}\tp{\frac{q}{n}}^{k} = \exp\tp{-O(k\log n)}.
\end{align*}
On the other hand, the relation $n=(k\log^2 k)^{1/\eps}$ also gives
\begin{align*}
    n^\varepsilon = k\log^2 k, \quad \log n = \frac{1}{\varepsilon} \tp{\log k+2\log\log k} = O(\log k),
\end{align*}
which means $k\log n = O(k\log k) = o(n^\varepsilon)$ as $k, n \rightarrow \infty$. Therefore $\exp\tp{o(n^\varepsilon)}$ traces are sufficient for a mean-based algorithm to distinguish between $\*x$ and $\*y$. However, this is a contradiction to the assumption that any mean-based algorithm requires $\exp(\Omega(n^{\eps}))$ traces to distinguish between $\*x$ and $\*y$.
\end{proof}

\section{From PTE solutions to hard-to-distinguish strings} \label{sec:pte-to-strings}
In this section, we prove Theorem~\ref{thm:PTE-to-hardstring}, which says that PTE solutions imply ``hard'' strings for mean-based trace reconstruction.

The proof uses the following lemma.

\begin{lemma}[Lemma 5.4 of~\cite{borwein1999littlewood}]\label{lem:factor-coef-bound}
	Suppose 
	\begin{align*}
	p(x) = \sum_{j=0}^{n}a_j x^j, && \abs{a_j} \le 1, a_j \in \mathbb{C} \\
	p(x) = (x-1)^k q(x), \quad q(x) = \sum_{j=0}^{n-k}b_j x^j, && b_j \in \mathbb{C}.
	\end{align*}
	Then
	\begin{align*}
	\norm{q}_1=\sum_{j=0}^{n-k}\abs{b_j} \le (n+1)\tp{\frac{e n}{k}}^k.
	\end{align*}
\end{lemma}

The following lemma is an analogue of the Mean Value Theorem for analytic functions.

\begin{lemma} \label{lem:mean-value}
	Let $f(z)$ be an analytic function on an open set $D$, such that $|f'(z)| \le M$ for all $z \in D$. Then for $z_0, z$ in the closure of $D$ such that the line connecting $z$ and $z_0$ is contained in $D$, we have
	\begin{align*}
	|f(z)| \le |f(z_0)| + M\cdot \abs{z-z_0}.
	\end{align*}
\end{lemma}
\begin{proof}
	We write $f(x+yi)=u(x,y)+iv(x,y)$ for functions $u,v\colon \mathbb{R}^2\rightarrow\mathbb{R}$.
	
	Since $f(z)$ is an analytic function, it satisfies the Cauchy-Riemann equations (see, for instance, Chapter I, \S 6 of~\cite{lang2013complex}): 
	\begin{align*}
		\diffp{u}{x} = \diffp{v}{y}, \diffp{u}{y}=-\diffp{v}{x}, \textup{ and }f'(x+yi)=\diffp{u}{x} - \diffp{u}{y}i.
	\end{align*}
	Let $r(x,y)\coloneqq \abs{f(x+yi)}$. In other words, $r^2=u^2+v^2$. Taking partial derivatives of $x$ and $y$ on both sides gives
	\begin{align*}
		2r\cdot\diffp{r}{x} &= 2u\cdot\diffp{u}{x} + 2v\cdot\diffp{v}{x}, \\
		2r\cdot\diffp{r}{y} &= 2u\cdot\diffp{u}{y} + 2v\cdot\diffp{v}{y}.
	\end{align*}
	Squaring both sides of both equations and combining give
	\begin{align*} 
	\norm{\nabla r}_2 \coloneqq \norm{\tp{\diffp{r}{x}, \diffp{r}{y}}}_2 = \abs{f'(z)}\le M. 
	\end{align*}
	Now consider the auxiliary function 
	\begin{align*}
	h(t) \coloneqq \abs{f\tp{(1-t)z_0+tz}} = r\tp{x_t, y_t}
	\end{align*}
	where $t \in [0,1]$, and $x_t, y_t \in \mathbb{R}$ are such that $(1-t)z_0+tz=x_t+y_ti$. By the chain rule and Cauchy-Schwartz, for all $t \in (0,1)$ we have
	\begin{align*}
		h'(t) = \langle \nabla r(x_t, y_t), (x-x_0, y-y_0) \rangle \le \norm{\nabla r}_2 \cdot \norm{(x-x_0, y-y_0)}_2 \le M\cdot \abs{z-z_0}.
	\end{align*}
	By the Mean Value Theorem, there exists $\bar{t} \in (0,1)$ such that
	\begin{align*}
		h'(\bar{t}) = \frac{h(1)-h(0)}{1-0} = |f(z)| - |f(z_0)|.
	\end{align*}
	This implies 
	\begin{align*}
	|f(z)| = |f(z_0)| + h'(\bar{t}) \le |f(z_0)| + M\cdot \abs{z-z_0}.
	\end{align*}
\end{proof}

\begin{lemma} \label{lem:sup-subarc}
Let $f(z)$ be a polynomial of degree $n$ which can be factorized as $f(z)=(z-1)^{k+1}q(z)$ for some polynomial $q(z)$. Then for any $\alpha > 0$ we have
\begin{align*}
	\sup\set{\abs{f\tp{p+qe^{i\theta}}} \colon |\theta| < 1/\tp{qn^{1+\alpha}}} < 12n^{1-\alpha k}.
\end{align*}
\end{lemma}
\begin{proof}
Let $\theta$ be such that $|\theta|<1/(qn^{1+\alpha})$. Item 2 of Lemma~\ref{lem:trig-bounds} implies $|z-1|\le q|\theta|\le 1/n^{1+\alpha}$.

Denote $g(z)=(z-1)q(z)$. By Lemma~\ref{lem:factor-coef-bound}, we have 
\begin{align*}
\norm{g}_1 \le (n+1)\tp{\frac{en}{k}}^{k} < 6n^{k+1}.
\end{align*}
Therefore $|g'(z)| \le (n+1) \cdot \norm{g}_1 \le 12n^{k+2}$. Applying Lemma~\ref{lem:mean-value} with $D$ being the open unit disk, $z_0=1$ and $z=p+qe^{i\theta}$, we have
\begin{align*}
	\abs{g\tp{z}} \le |g(1)| + 12n^{k+2}\cdot \abs{z-1} \le 12n^{k+2} \cdot 1/n^{1+\alpha} < 12n^{k+1}.
\end{align*}
The lemma follows since
\begin{align*}
	|f(z)| = |z-1|^k \cdot |g(z)| \le \tp{1/n^{1+\alpha}}^{k} \cdot 12n^{k+1} = 12n^{1-\alpha k}.
\end{align*}
\end{proof}

Now we are ready to prove Theorem~\ref{thm:PTE-to-hardstring}. We recall the statement below.

\PTEtohardstring*

\begin{proof}[Proof of Theorem~\ref{thm:PTE-to-hardstring}]
We write $f\coloneqq Q_{\*x}-Q_{\*y}$. Due to Theorem~\ref{thm:reduction-to-comp}, it suffices to show that
\begin{align*}
	\sup\set{\abs{w^{\ell}f(w)}\colon w \in \partial B(p;q)} \le n^{-\Omega(k)}.
\end{align*}
Writing $w = p+qe^{i\theta}$ where $\theta \in (-\pi, \pi]$, we prove the theorem in the following two cases.

\textit{Case 1:} $|\theta| \ge 1/(qn^{1+\eps/3})$. 

By Item 1 of Lemma~\ref{lem:trig-bounds}, we have
\begin{align*}
|w| \le 1-2pq\tp{\frac{\theta}{\pi}}^2\le 1-2pq\tp{\frac{1}{4qn^{1+\eps/3}}}^2\le 1-\frac{p}{8qn^{2+2\eps/3}}.
\end{align*}
Therefore
\begin{align*}
	\abs{w^{\ell}f(w)} = |w|^{\ell} \cdot |f(w)| \le \tp{1-\frac{p}{8qn^{2+2\eps/3}}}^{n^{3+\eps}} \cdot (n+1) \le \exp\tp{-\Omega(pn^{1+\eps/3}/q)} < n^{-\Omega(pk/q)}.
\end{align*}
The last inequality is due to $n^{1+\eps/3}>n\ln n\ge k\ln n$ for large enough $n$.

\textit{Case 2:} $|\theta| < 1/(qn^{1+\eps/3})$.

We recall that $A$ and $B$ form a solution to the degree-$k$ PTE system. According to the definition of $\*x$, $\*y$ and Theorem~\ref{thm:PTE}, the polynomial $f$ can be factorized as $f(z)=(z-1)^{k+1}q(z)$ for some polynomial $q(z)$. Therefore, we can apply Lemma~\ref{lem:sup-subarc} with $\alpha=\eps/3$ and obtain that 
\begin{align*}
|w^{\ell}f(w)| \le |f(w)| < 12n^{1-\eps k/3}.
\end{align*}

Combining the two cases, we have 
\begin{align*}
\sup\set{\abs{w^{\ell}f(w)} \colon w\in \partial B(p;q)}\le n^{-\Omega(k)}.
\end{align*}
\end{proof}

\section{Hard strings at edit distance 4} \label{sec:edist-four}
The goal of this section is to prove Theorem~\ref{thm:main-edist-four}, and thus exhibit two strings at edit distance $4$ such that every mean-based algorithm requires super-polynomially many traces.

\iffalse
Before proving Theorem~\ref{thm:main-edist-four}, we need the following lemma which upper bounds the distance between $w^d$ and $1$, where $w \in \partial B(p;q)$ is a point on the shifted circle.
\begin{lemma} \label{lem:power-distance}
Let $p,q \in (0,1)$ be such that $p+q=1$. Let $\theta \in [0,\pi]$ and write $w = p+qe^{i\theta}$. Then for any integer $d \ge 0$, we have
\begin{align*}
\abs{w^d-1} \le dq\theta.
\end{align*}
\end{lemma}
\begin{proof}
We first bound $|w-1|$ as follows. Note that
\begin{align*}
	|w-1|^2 &= \tp{p+q\cos\theta-1}^2+\tp{q\sin\theta}^2 \\
	&= q^2\tp{\cos\theta-1}^2 + q^2\sin^2\theta \\
	&= q^2\tp{\cos^2\theta-2\cos\theta+1+\sin^2\theta} \\
	&= 2q^2\tp{1-\cos\theta} \\
	&= 4q^2\sin^2\frac{\theta}{2}.
\end{align*}
This implies $|w-1| = 2q\sin(\theta/2) < q\theta$. Now by the triangle inequality and the bound $|w| \le 1$, we have
\begin{align*}
	\abs{w^d-1} = \abs{w-1} \cdot \abs{1+w+w^2+\dots+w^{d-1}} \le dq\theta
\end{align*}
as desired.
\end{proof}
\fi

We will prove the following theorem, which is a more concrete version of Theorem~\ref{thm:main-edist-four}.

\begin{theorem} \label{thm:concrete-edist-four}
Let $k$ be an odd integer and $n=\sum_{j=0}^{k}3^j$ be an even integer, and $R(w)=\prod_{j=0}^{k}\tp{1-w^{3^j}}$ be a polynomial of degree $n$. Let $E_n(w) = \sum_{j=0}^{n/2}w^{2j}$. Then $Q_{\*e}(w) \coloneqq E_n(w) - R(w)$ is a 0/1-coefficient polynomial which corresponds to a string $\*e \in \set{0, 1}^{n+1}$. Moreover, any two strings $\*x$, $\*y$ of the form $\*x=\*a10\*e$ and $\*y=\*a\*e01$ satisfy
\begin{align*}
    \sup\set{\abs{Q_{\*x}(w) - Q_{\*y}(w)} \colon w \in \partial B(p;q)} \le \exp\tp{-\Omega(\log^2 n)},
\end{align*}
where $\*a$ is an arbitrary string of length $n$. Here $p, q \in (0,1)$ are constants.
\end{theorem}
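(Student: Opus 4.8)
The plan is to reduce $Q_{\*x}(w)-Q_{\*y}(w)$ to an explicit product and then estimate its modulus directly on $\partial B(1/2;1/2)$ (this is the point where $p=1/2$ matters: only then is the shifted circle the one parametrized by $w=(1+e^{i\theta})/2$, to which Lemma~\ref{lem:trig-estimates} applies). There are four steps. \emph{Step 1 (the string).} Expand $R(w)=\prod_{j=0}^{k}(1-w^{3^j})=\sum_{S\subseteq\{0,\dots,k\}}(-1)^{|S|}w^{\sum_{j\in S}3^j}$. The exponents $\sum_{j\in S}3^j$ are pairwise distinct — they are exactly the integers whose base-$3$ digits lie in $\{0,1\}$ — so no two terms merge and every coefficient of $R$ is in $\{-1,0,1\}$; a nonzero coefficient sits at an even exponent precisely when its sign is $+1$, since $\sum_{j\in S}3^j\equiv|S|\pmod 2$. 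As $E_n$ carries a $1$ on every even exponent up to $n$ and $0$ on the odd ones, combining it with $R$ (matching the signs on the common support) leaves a polynomial with coefficients in $\{0,1\}$, i.e.\ an honest binary string $\*e$. Finally $\*x=\*a10\*e$ and $\*y=\*a\*e01$ have equal length, and deleting the block $10$ right after $\*a$ and appending $01$ turns $\*x$ into $\*y$, so $\dE(\*x,\*y)\le 4$.

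\emph{Step 2 (closed form).} In both strings $\*a$ occupies positions $0,\dots,n-1$, so writing each of $Q_{\*x},Q_{\*y}$ block by block and subtracting cancels $Q_{\*a}$ and leaves an expression of the form $w^{n}\tp{(1-w^{n+2})-(1-w^{2})Q_{\*e}(w)}$. Using the telescoping identity $(1-w^{2})E_n(w)=1-w^{n+2}$ (valid since $n$ is even) and substituting $Q_{\*e}=R+E_n$, the bracket collapses to $-(1-w^{2})R(w)$, so
\begin{align*}
Q_{\*x}(w)-Q_{\*y}(w)=-\,w^{n}(1-w^{2})R(w),\qquad \abs{Q_{\*x}(w)-Q_{\*y}(w)}=\abs{w}^{n}\,\abs{1-w^{2}}\prod_{j=0}^{k}\abs{1-w^{3^j}} .
\end{align*}
(In passing, $w=1$ is here a root of multiplicity $k+2=\Theta(\log n)$ — a regime that the converse of Theorem~\ref{thm:multiplicity-to-maximum} cannot reach, which is why a direct estimate is needed.)

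\emph{Step 3 (estimate on the circle).} Parametrize $\partial B(1/2;1/2)$ by $w=(1+e^{i\theta})/2=\cos(\theta/2)\,e^{i\theta/2}$, $\theta\in(-\pi,\pi]$; then $\abs{w}=\cos(\abs{\theta}/2)\le 1$ and $\abs{1-w^{2}}\le 2$, so it suffices to bound $\abs{w}^{n}\prod_j\abs{1-w^{3^j}}$. Put $\theta_0=k\cdot 3^{-(k+1)/2}$, and recall $3^{k+1}=2n+1$. If $\abs{\theta}\ge\theta_0$, then $1-\cos x=\Omega(x^2)$ on $[0,\pi/2]$ gives $\abs{w}^{n}=\cos^{n}(\abs{\theta}/2)\le\exp(-\Omega(n\theta_0^{2}))=\exp(-\Omega(k^{2}))$, while $\prod_j\abs{1-w^{3^j}}\le 2^{k+1}$ is absorbed. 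If $\abs{\theta}<\theta_0$ (the case $\theta=0$ being trivial since $R(1)=0$), set $J=\min\{k,\lfloor\log_3(1/\abs{\theta})\rfloor\}$, so $J\ge \tfrac{k+1}{2}-\log_3 k-1=\Omega(k)$. For $j\le J$ we have $3^{j}\abs{\theta}\le 1$, so Lemma~\ref{lem:trig-estimates}(2) gives $\abs{1-w^{3^j}}\le 2\abs{\sin(3^{j}\theta/4)}\le 3^{j}\abs{\theta}/2$; for $j>J$ use $\abs{1-w^{3^j}}\le 2$. Hence
\begin{align*}
\prod_{j=0}^{k}\abs{1-w^{3^j}}\le\tp{\prod_{j=0}^{J}\frac{3^{j}\abs{\theta}}{2}}2^{k-J}=\abs{\theta}^{J+1}\,3^{J(J+1)/2}\,2^{k-2J-1}\le 3^{-J(J+1)/2}\,2^{k},
\end{align*}
using $\abs{\theta}\le 3^{-J}$ at the last step; with $J=\Omega(k)$ this is $\exp(-\Omega(k^{2}))$, and $\abs{w}^{n}\le 1$. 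In both cases $\abs{Q_{\*x}(w)-Q_{\*y}(w)}\le\exp(-\Omega(k^{2}))$ uniformly over $\partial B(1/2;1/2)$.

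\emph{Step 4 and the hard part.} Since $n=(3^{k+1}-1)/2$ we have $k=\Theta(\log n)$, so $\exp(-\Omega(k^{2}))=\exp(-\Omega(\log^{2} n))$, which is the claim. The only step with real content is Step 3, and within it the small-$\abs{\theta}$ bound on $\prod_j\abs{1-w^{3^j}}$: the geometric spacing of the exponents $3^{j}$ is precisely what makes the product over the "small" factors telescope down to $3^{-\Theta(J^{2})}$, and one must tune the threshold $\theta_0$ so that the two regimes — "$\Omega(k)$ small factors present" and "$\abs{w}^{n}$ already super-exponentially small" — together cover the whole circle. Lemma~\ref{lem:trig-estimates} is the tool that makes the per-factor bound $\abs{1-w^{3^j}}\le 3^{j}\abs{\theta}/2$ go through cleanly; everything in Steps 1, 2, and 4 is routine bookkeeping.
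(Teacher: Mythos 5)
Your proposal is correct and takes essentially the same route as the paper: the same factorization $Q_{\*x}(w)-Q_{\*y}(w)=\pm\, w^{n}(w^{2}-1)R(w)$ followed by the same two-regime estimate on $\partial B(1/2;1/2)$ (large $\abs{\theta}$ killed by $\abs{w}^{n}$, small $\abs{\theta}$ killed by $\Omega(k)$ factors $\abs{1-w^{3^j}}$ bounded via Lemma~\ref{lem:trig-estimates}), differing only in the choice of threshold and bookkeeping constants. One shared quibble: as in the paper's own statement, the 0/1-coefficient claim in your Step 1 actually requires taking $Q_{\*e}=E_n-R$ rather than $R+E_n$ (as written, $R+E_n$ has coefficient $2$ at even exponents in the support of $R$ and $-1$ at odd ones), a sign flip that changes nothing downstream since only $\abs{R(w)}$ enters the final bound.
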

\begin{proof}
$R(w)$ has the following properties: (1) The coefficients of $R$ belong to $\set{-1,0,1}$ since each monomial occurs only once in the expansion. (2) Odd-degree terms have negative signs, and even-degree terms have positive signs. It follows that $E_n(w)-R(w)$ is a polynomial with 0/1 coefficients. 

We can write 
\begin{align*}
P(w) &= Q_{\*x}(w) - Q_{\*y}(w) = w^n\tp{(w^2-1)Q_{\*e}(w) - \tp{w^{n+2}-1}} \\
&= w^n(w^2-1)\tp{Q_{\*e}(w) - E_n(w)} \\
&= w^n(1-w^2)R(w).
\end{align*}

Consider a point $w=p+qe^{i\theta}$ on the circle $\partial B(p;q)$, where $\theta \in (-\pi, \pi]$. We consider two cases.

\textit{Case 1: } $|\theta| \ge 3^{-k/4}\pi$. 

Due to Item 1 of Lemma~\ref{lem:trig-bounds}, we have
\begin{align*}
	|w| \le 1-2pq\tp{\frac{\theta}{\pi}}^2 \le 1-2pq \cdot 3^{-k/2}.
\end{align*}
Therefore
    \begin{align*}
        |P(w)| \le |w|^n \cdot 2(n+1) \le \tp{1 - 2pq\cdot 3^{-k/2}}^n \cdot 2(n+1) \le \exp\tp{-\Omega\tp{pq\sqrt{n}}}.
    \end{align*}
    The last inequality is because $1-x < e^{-x}$ and $n=\sum_{j=0}^{k}3^j > 3^{k}$.
    
\textit{Case 2:} $|\theta| < 3^{-k/4}\pi$. 

By Item 3 of Lemma~\ref{lem:trig-bounds}, we have
    \begin{align*}
        |R(w)| &= \prod_{j=0}^{k/4-1}\abs{w^{3^j}-1} \cdot \prod_{j=k/4}^{k}\abs{w^{3^j}-1} \le \prod_{j=0}^{k/4-1} 3^jq\abs{\theta} \cdot 2^{3k/4} \le \prod_{j=1}^{k/4}\tp{3^{-j}\pi} \cdot 2^{3k/4} \\
        &\le 3^{-k^2/32} \cdot (8\pi)^{k/4} = \exp\tp{-\Omega(k^2)} = \exp\tp{-\Omega\tp{\log^2 n}}.
    \end{align*}
    Therefore $|P(w)| \le 2|R(w)| \le \exp\tp{-\Omega\tp{\log^2 n}}$.
\end{proof}

The edit distance between strings $\*x$ and $\*y$ constructed in the theorem above is clearly at most 4. Thus, Theorem~\ref{thm:main-edist-four} follows via Theorem~\ref{thm:reduction-to-comp} (see Appendix~\ref{sec:reduction} for its proof).

\iffalse
\enote{Need a line about how this corollary follows from theorem and something else?}
\begin{corollary}(PTE solutions)\label{cor:exp-pte-sols}
	For $\*x$ and $\*y$ obtained from Theorem~\ref{thm:main-edist-four} with $\*a={\bf 0}$,  consider the sets $A=\{i \colon x_i\ne 0\}$ and $B=\{i \colon y_i\ne 0\}.$  Then the sets $A$ and $B$ form a solution to the PTE system with size $s= \Theta(3^k)$ and degree $k'=k+2$.
\end{corollary}
\fi
%\minote{For the degree of the PTE system, notice that $Q_{\*x}-Q_{\*y}=P$ has $k+2$ roots at 1. For the size, I am using the degree of $P$ as a rough estimate.}\enote{isn't the size the size of the support which is something like $2^k/2$?}

\begin{remark}
We make several remarks on the theorem. First, the bound is essentially tight for the constructed strings, since the polynomial $Q_{\*x}-Q_{\*y}$ has $k+2=O(\log n)$ roots at 1, and Theorem~\ref{thm:multiplicity-of-one} implies that $n^{O(k)}=\exp(O(\log^2 n))$ traces are also sufficient for distinguishing between $\*x$ and $\*y$ by mean-based algorithms. Second, the theorem exhibits two strings which attain the bound in Theorem~\ref{thm:multiplicity-of-one} for $k=\Theta(\log n)$, meaning that Theorem~\ref{thm:multiplicity-of-one} generally cannot be improved (at least in the regime $k=\Theta(\log n)$). Third, by Theorem~\ref{thm:PTE} the constructed strings imply a solution to the degree-$(k+1)$ PTE system. However, the size of the solution is exponential, since the sparsity of $Q_{\*x}-Q_{\*y}$ is $\Theta(3^k)$.   
\end{remark}

\section{Higher edit distance with special structures} \label{sec:high-edist}

In this section we show a more general result about strings at higher edit distance that have a special structure which leads to easy distinguishability. At the end we also discuss some implications about the edit distance 2 and 4 cases.

We consider pairs of strings $\*x, \*y \in \set{0,1}^n$ with the following block structure:
\begin{align*}
    \*x=\*x_1\*x_2\cdots\*x_d, \quad \*y=\*y_1\*y_2\cdots\*y_d,
\end{align*}
where for each $i=1,2,\ldots,d$, $\*x_i$ and $\*y_i$ are strings of length $\ell_i > 0$. Moreover, each block $i$ falls into one of the following cases:
\begin{enumerate}
    \item $\*x_i = \*y_i$;
    \item $\*x_i = a_i \*s_i$ and $\*y_i = \*s_i b_i$ for bits $a_i, b_i \in \set{0,1}$ and string $\*s_i$;
    \item $\*x_i = \*s_i a_i$ and $\*y_i = b_i \*s_i$ for bits $a_i, b_i \in \set{0,1}$ and string $\*s_i$;
    \item $\*x_i = a_i \*s_i$ and $\*y_i = b_i \*s_i$ for distinct bits $a_i, b_i \in \set{0,1}$ and string $\*s_i$;
    \item $\*x_i = \*s_i a_i$ and $\*y_i = \*s_i b_i$ for distinct bits $a_i, b_i \in \set{0,1}$ and string $\*s_i$.
\end{enumerate}

We remark that $\*x$ and $\*y$ of the above form must be within edit distance $2d$ of each other, yet there are certainly strings at edit distance $2d$ which fail to follow this pattern (for example  $\*x=a_1\ldots a_d \*s$ and $\*y = \*s b_1\ldots b_d$ generally do not have such a block decomposition).

We also note that if $\*x$ and $\*y$ have different Hamming weights (the Hamming weight of a string is the number of 1s in it), this makes them easily distinguishable by a mean-based algorithm. This is because their traces will exhibit a difference of $q$ in expected Hamming weight. Using $O(1/q^2)$ traces, this difference will be noticeable by a mean-based algorithm. Therefore, we will focus on the more interesting case where $\*x$ and $\*y$ have the same Hamming weight.

Since $Q_{\*x}(1)$ exactly equals to the Hamming weight of $\*x$, we know that $w-1$ is a factor of the polynomial $Q_{\*x}(w)-Q_{\*y}(w)$ if $\*x$ and $\*y$ have the same Hamming weight. It is thus natural to factor $Q_{\*x}(w)-Q_{\*y}(w)=(w-1)R(w)$ for some polynomial $R$, and study the multiplicity of zeros of $R$. The special block structure described above allows us to explicitly write down the expression for $R$, and thus to study the number of sign changes in $R$. This is the main idea in proving the following theorem, which is the formal version of Theorem~\ref{thm:informal-special-higher-edist}.

\begin{theorem} \label{thm:special-higher-edist}
Let $\*x$ and $\*y$ be strings with the special structure mentioned above. Then
\begin{align*}
    \norm{\*E(\*x)-\*E(\*y)}_{\ell_1} \ge \frac{q}{e}\tp{\frac{q}{n}}^{3d}.
\end{align*}
\end{theorem}
\begin{proof}
As a warm-up, let us first consider the case where all $a_i$'s and $b_i$'s are zero. Under this assumption cases 4 and 5 never arise in the above block decomposition. We can partition $[n]$ into three sets $S_1, S_2, S_3$, each of which collecting the indices of contiguous substrings of $\*x$ (and therefore of $\*y$) of lengths $\ell_i$ corresponding to the respective case of the first three special cases above. Let $t_i = \sum_{j=1}^{i-1}\ell_j$ be the starting index of block $i$ (note that $t_1=0$ and $t_{d+1} = n$). As we are going to decompose the polynomials $Q_{\*x}(w)$ and $Q_{\*y}(w)$ using the block structure, these indices will come in handy later. 

Recall that the polynomial $Q_{\*x}(w)$ is defined as
\begin{align*}
    Q_{\*x}(w) = x_0 + x_1 w + x_2 w^2 + \ldots + x_{n-1} w^{n-1}.
\end{align*}

Given the block structure of $\*x$ and $\*y$, we can express $Q_{\*x}(w)-Q_{\*y}(w)$ in terms of the polynomials $Q_{\*x_i}(w)-Q_{\*y_i}(w)$ as 
\begin{align*}
    Q_{\*x}(w) - Q_{\*y}(w) = \sum_{i=1}^{d}w^{t_i}\tp{Q_{\*x_i}(w) - Q_{\*y_i}(w)}. 
\end{align*}
For $i \in S_2$, we have 
\begin{align*}
Q_{\*x_i}(w)-Q_{\*y_i}(w) = wQ_{\*s_i}(w) - Q_{\*s_i}(w) = (w-1)Q_{\*s_i}(w).
\end{align*}
Similarly for $i \in S_3$, we have
\begin{align*}
Q_{\*x_i}(w)-Q_{\*y_i}(w) = Q_{\*s_i}(w) - wQ_{\*s_i}(w) = (1-w)Q_{\*s_i}(w).
\end{align*}
Putting everything together, we get
\begin{align*}
    Q_{\*x}(w) - Q_{\*y}(w) &= \sum_{i \in S_2}w^{t_i}(w-1)Q_{\*s_i}(w) + \sum_{i \in S_3}w^{t_i}(1-w)Q_{\*s_i}(w) \\
    &= (w-1)\tp{\sum_{i \in S_2}w^{t_i}Q_{\*s_i}(w) - \sum_{i \in S_3}w^{t_i}Q_{\*s_i}(w) }.
\end{align*}
Towards applying Lemma~\ref{lem:Descartes}, we are going to upper bound the number of sign changes in the second term of the above expression. We note that $Q_{\*s_i}(w)$ is a polynomial with 0/1 coefficients. Each summand $w^{t_i}Q_{\*s_i}(w)$ contains a set of monomials whose degrees are in an interval $[t_i, t_{i+1})$, and all these intervals are disjoint from each other. It follows that the number of sign changes is at most $d$. The lemma also follows by Theorem~\ref{thm:multiplicity-of-one}.

Now let us turn to the case where $a_i$'s and $b_i$'s are not necessarily zero. Due to ``linearity'' of the mapping $\*x \mapsto Q_{\*x}$ (i.e. $\*x+\*y \mapsto Q_{\*x} + Q_{\*y}$), it will be helpful to write $\*x = \*x_{\emptyset} + \*x_{\Delta}$, where $\*x_{\emptyset}$ is $\*x$ but with all the $a_i$'s replaced by zero, and $\*x_{\Delta}$ contains only the $a_i$'s. Similarly write $\*y = \*y_{\emptyset} + \*y_{\Delta}$.

We recall that $\*x$ and $\*y$ have the same Hamming weight. That means the the following two sets
\begin{align*}
A=\set{i \colon a_i = 1} \textup{ and } B = \set{i \colon b_i = 1}
\end{align*}
have the same cardinality. Let $\pi \colon A \rightarrow B$ be a matching between the $A$ and $B$. For $i \in A$ let $\sigma(i)$ be the index of $a_i$ in $\*x$ and let $\tau(i)$ be the index of $b_{\pi(i)}$ in $\*y$. It follows that $\sigma(i) = t_i$ or $t_{i+1}-1$, and $\tau(i) = t_{\pi(i)}$ or $t_{\pi(i)+1}-1$, and that 
\begin{align*}
Q_{\*x_{\Delta}}(w) - Q_{\*y_{\Delta}}(w) = \sum_{i \in A}\tp{w^{\sigma(i)}-w^{\tau(i)}}.
\end{align*}
We then have
\begin{align*}
    Q_{\*x}(w) - Q_{\*y}(w) &= \tp{Q_{\*x_{\emptyset}}(w) - Q_{\*y_{\emptyset}}(w)} + \tp{Q_{\*x_{\Delta}}(w) - Q_{\*y_{\Delta}}(w)} \\
    &= \sum_{i \in S_2}w^{t_i}(w-1)Q_{\*s_i}(w) + \sum_{i \in S_3}w^{t_i}(1-w)Q_{\*s_i}(w) + \sum_{i \in A}\tp{w^{\sigma(i)}-w^{\tau(i)}} \\
    &= (w-1)\tp{\sum_{i \in S_2}w^{t_i}Q_{\*s_i}(w) - \sum_{i \in S_3}w^{t_i}Q_{\*s_i}(w) + \sum_{i \in A}J_i(w) },
\end{align*}
where each $J_i(w)$ is a polynomial of the form
\begin{align*}
    J_i(w) = \begin{cases}
    w^{\tau(i)} + w^{\tau(i)+1} + \ldots + w^{\sigma(i)-1} & \textup{if $\sigma(i) > \tau(i)$}, \\
    -w^{\sigma(i)} - w^{\sigma(i)+1} - \ldots - w^{\tau(i)-1} & \textup{if $\sigma(i) < \tau(i)$}.
    \end{cases}
\end{align*}

Let us focus on the polynomial $R(w) = R_1(w) + R_2(w)$ where
\begin{align*}
    R_1(w) \coloneqq \sum_{i \in S_2}w^{t_i}Q_{\*s_i}(w) - \sum_{i \in S_3}w^{t_i}Q_{\*s_i}(w), \quad R_2(w) \coloneqq \sum_{i \in A}J_i(w).
\end{align*}
Once more we are going to bound the number of sign changes in $R$. Fix an arbitrary $i$ and consider two degrees $k_1 \neq k_2 \in [t_i+1, t_{i+1}-1)$. From previous discussions we know that $w^{k_1}$ and $w^{k_2}$ have the same sign in $R_1$. For $R_2$, we note that for each $j \in A$, $w^{k_1}$ and $w^{k_2}$ have the same coefficients in $J_j$. This is because the coefficients of $J_j$ are identically $1$ (or $-1$) in the degree interval $[\tau(i), \sigma(i))$ (or $[\sigma(i), \tau(i))$), which either contains or is disjoint with $\set{k_1, k_2}$.  Therefore, the coefficients of $w^{k_1}$ and $w^{k_2}$ are the same in $R_2$. Finally, notice that the coefficients of $R_1$ belong to $\set{0,1,-1}$, and that the coefficients of $R_2$ are integers. Therefore $w^{k_1}$ and $w^{k_2}$ have the same sign in $R=R_1+R_2$.

Given a sign change $(i,j)$ in $R$ (cf. the remark below Lemma~\ref{lem:Descartes}), we say an index $k$ \emph{cuts} $(i,j)$ if $i \le k \le j$. The above argument shows that any sign change in $R$ must be cut by some index in the set
\begin{align*}
    C = \bigcup_{i=1}^{d}\set{t_i, t_{i+1}-1} = \set{t_1} \cup \bigcup_{i=2}^{d}\set{t_i-1, t_i} \cup \set{t_{d+1}-1}.
\end{align*}
As $t_1=0$ and $t_{d+1}-1=n-1$ each cuts at most 1 sign change, and for each $i$, $t_i-1$ and $t_i$ jointly cut at most 3 sign changes, it follows that $R$ has at most $3(d-1) + 2 = 3d-1$ sign changes. 

Now we can apply Lemma~\ref{lem:Descartes} and get that the multiplicity of zero of $Q_{\*x} - Q_{\*y}$ at 1 is at most $3d$ (1 from the factor $(w-1)$, $3d-1$ from $R(w)$). By Theorem~\ref{thm:multiplicity-of-one}, we conclude that
\begin{align*}
    \norm{\*E(\*x)-\*E(\*y)}_{\ell_1} \ge \frac{q}{e}\tp{\frac{q}{n}}^{3d}.
\end{align*}
\end{proof}

\paragraph{What happens to edit distance 2 pairs?}
Given that at edit distance 4 there are already hard strings for mean-based algorithms, this is indeed a natural question to ask. In fact, we will show that a mean-based algorithm can distinguish between $\*x$ and $\*y$ using only polynomially many traces, and this will be an application of Theorem~\ref{thm:special-higher-edist}.

\begin{corollary} \label{cor:hamming-dist-two}
Let $\*x, \*y \in \set{0, 1}^n$ be two arbitrary (distinct) strings with $\dE(\*x, \*y) = 2$. Then $n^{O(1)}$ traces are sufficient for a mean-based algorithm to distinguish between $\*x$ and $\*y$.
\end{corollary}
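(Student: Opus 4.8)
The plan is to derive Corollary~\ref{cor:hamming-dist-two} from Theorem~\ref{thm:special-higher-edist} by arguing that any pair of distinct strings at edit distance exactly $2$ either has unequal Hamming weight (in which case a constant number of traces suffices, as noted in the paragraph preceding Theorem~\ref{thm:special-higher-edist}), or, after stripping a common prefix and a common suffix, falls into the block structure with $d=2$ (or even $d=1$). First I would recall the standard fact that $\dE(\*x,\*y)=2$ means $\*y$ is obtained from $\*x$ by one deletion and one insertion (a single substitution is the degenerate case where the inserted bit lands at the deleted position, giving Hamming distance $1$ but then weights would differ, handled separately). So write $\*x = \*u\, a\, \*v\, b\, \*w$-type decompositions: let $\alpha$ be the length of the longest common prefix of $\*x$ and $\*y$, and $\beta$ the length of the longest common suffix of the remaining parts, and focus on the ``middle'' where they genuinely differ.

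The key structural step is a short case analysis on where the deleted and inserted symbols sit relative to each other. After removing the common prefix of length $\alpha$ and the common suffix of length $\gamma$, we are left with two strings $\*x'$ and $\*y'$ of the same length (since weights are equal and lengths are equal overall) that differ in their first and last coordinates only in a controlled way. Concretely, one shows $\*x'$ and $\*y'$ must be of one of the forms $a\*s$ vs.\ $\*s b$, or $\*s a$ vs.\ $b\*s$, or the substitution-like forms $a\*s$ vs.\ $b\*s$ and $\*s a$ vs.\ $\*s b$ with $a\neq b$ — exactly cases 2 through 5 of the block decomposition — possibly with the common-prefix block of case~1 sitting in front and the common-suffix block of case~1 behind. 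That gives the decomposition $\*x = \*x_1\*x_2\*x_3$, $\*y=\*y_1\*y_2\*y_3$ with $d\le 3$ blocks where $\*x_1=\*y_1$ and $\*x_3=\*y_3$ and the middle block is of type 2--5; in fact the two matching blocks can be merged appropriately so that $d\le 2$ suffices, but even $d=3$ gives an $n^{O(1)}$ bound. Then Theorem~\ref{thm:special-higher-edist} yields $\sup\{|Q_{\*x}(w)-Q_{\*y}(w)| : w\in\partial B(p;q)\}\ge n^{-O(1)}$, and the reduction in Appendix~\ref{sec:reduction} converts this into an $n^{O(1)}$ sample-complexity upper bound for a mean-based distinguisher, as desired.

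The main obstacle I anticipate is getting the case analysis of the edit-distance-$2$ transformation genuinely exhaustive and matching it cleanly to the five block types — in particular handling the boundary subtleties when the deletion and insertion positions coincide, are adjacent, or straddle the common prefix/suffix, and making sure that after stripping common affixes the residual middle really does have length $\ge 1$ and really is one of the allowed forms rather than something like $a_1a_2\*s$ vs.\ $\*s b_1 b_2$ (which, as the remark after the block decomposition warns, need \emph{not} admit such a decomposition — but that would require edit distance $4$, not $2$, so it cannot actually occur here). Once the combinatorial claim ``$\dE(\*x,\*y)=2$ and $\dH$-weights equal $\Rightarrow$ the stripped strings have block structure with $d\le 2$'' is nailed down, the rest is an immediate invocation of Theorem~\ref{thm:special-higher-edist} and the mean-based reduction.
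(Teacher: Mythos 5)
Your proposal is correct and follows essentially the same route as the paper: reduce to the equal-Hamming-weight case, observe that $\dE(\*x,\*y)=2$ with equal weights means one bit is moved (the paper writes this as $\*x=\*a0\*b\*c$, $\*y=\*a\*b0\*c$ up to $0/1$ symmetry), which yields the block structure with $d=3$ and a middle block of type 2 or 3, and then invoke Theorem~\ref{thm:special-higher-edist}. Your worry about forms like $a_1a_2\*s$ versus $\*s b_1b_2$ is correctly dismissed, since those require edit distance $4$.
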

\begin{proof}
As before we assume $\*x$ and $\*y$ have the same Hamming weight. Due to the symmetry between 0 and 1, we may assume without loss of generality that $\*x = \*a0\*b\*c$ and $\*y = \*a\*b0\*c$ for strings $\*a, \*b, \*c$ with lengths $a, b, c$, respectively, satisfying $a+b+c+1=n$. We thus have the block decompositions $\*x=\*x_1\*x_2\*x_3$ and $\*y=\*y_1\*y_2\*y_3$ where
\begin{center}
\begin{tabular}{c c c}
    $\*x_1 = \*a$, & $\*x_2 = \*b0$, & $\*x_3 = \*c$, \\
    $\*y_1 = \*a$, & $\*y_2 = 0\*b$, & $\*y_3 = \*c$.
\end{tabular}
\end{center}
Note that this falls into the special strucutre mentioned above for $d=3$. Theorem~\ref{thm:special-higher-edist} then implies
\begin{align*}
    \sum_{j=0}^{n-1}\abs{\Exp_{\tilde{\*x}\sim\+D_{\*x}}[\tilde{x}_j] - \Exp_{\tilde{\*y}\sim\+D_{\*y}}[\tilde{y}_j]} \ge \frac{q}{e}\tp{\frac{q}{n}}^9,
\end{align*}
from which it follows that $n^{O(1)}$ traces are sufficient to distinguish between $\*x$ and $\*y$. 
\end{proof}
In fact, with a more careful analysis one can nail down the constant and show that the sample complexity is $O(n^2)$, leading to a sharp transition in sample complexity from edit distance 2 to 4.

\paragraph{Other cases for edit distance 4} We also mention that the only hard pairs at edit distance 4 have the form 
\begin{align*}
    \*x &= \*a \ a_1 \ \*b \ a_2 \ \*c \ \*d \ \*e, \\
    \*y &= \*a \ \*b \ \*c \ b_1 \ \*d \ b_2 \ \*e.
\end{align*}
The hard strings given in Theorem~\ref{thm:main-edist-four} are also in this form with $\*b = \*d = \varepsilon$ (the empty string). All other pairs not in this form will have the special structure mentioned earlier, and are thus easy.

\section{Conclusions and Open Problems} \label{sec:conclusion}

In this work we showed several results about the power and limitation of mean-based algorithms in distinguishing trace distributions of strings at small Hamming or edit distance.

Going beyond mean-based algorithms is obviously a major concern. A very natural next step is to incorporate ``multi-bit statistics'', namely the joint distribution of several bits of the traces. Indeed, the upper bound obtained in~\cite{chase2021new} is based on the joint distribution of roughly $n^{1/5}$ bits. Although this seems a much more general class of algorithms, the best bound they yield so far is still exponential. We leave as an open problem the power and limitation of algorithms based on multi-bit statistics.
\iffalse
Many open questions remain, besides whether the worst-case reconstruction problem is solvable in polynomial time.
We state below a few questions stemming from our work here.

\textbf{Problem 1} Can the connection between the multiplicity of zero at 1 and the supremum over the circle $\partial B(p;q)$ be tightened? Specifically, is the bound in Theorem~\ref{thm:multiplicity-of-one} tight? 

\textbf{Problem 2} Is the converse of Theorem~\ref{thm:multiplicity-of-one} true? Namely, does high multiplicity of zero at $1$ necessarily imply a small supremum over $\partial B(p;q)$? An affirmative answer to this question would establish an equivalence between PTE solutions and hard instances against mean-based trace reconstruction.
\fi
\section{Acknowledgements}
We are indebted to some anonymous reviewers for pointing us to several references that we previously missed and for many useful suggestions that have been incorporated in the current writeup.
\bibliographystyle{alpha}
\bibliography{references}

\appendix
%\newpage
\section{Mean-based algorithms and connection to complex analysis} \label{sec:reduction}

Fix a string $\*x \in \set{0, 1}^n$. The basic idea of~\cite{de2019optimal} and~\cite{nazarov2017trace} is to consider the average number of ``1''s at index $j$ in the traces of $\*x$, i.e. the expectations $E_j(\*x) \coloneqq \Exp_{\tilde{\*x} \sim \+D_{\*x}}[\tilde{x}_j]$ for $j=0,2,\ldots,n-1$, where $\tilde{x}_j = 0$ for $j > |\tilde{\*x}| - 1$. An algorithm is said to be \emph{mean-based} if its output depends only on the statistical estimates of $E_j(\*x)$ where $j=0,1,\ldots,n-1$.

\subsection{The reduction to complex analysis}
\cite{de2019optimal} and~\cite{nazarov2017trace} showed the following bound.

\begin{theorem}[\cite{de2019optimal}, \cite{nazarov2017trace}]
For all distinct $\*x, \*y \in \set{0, 1}^n$ it is the case that  
\begin{align} \label{ineq:dos-np}
\quad \sum_{j=0}^{n-1}\abs{E_j(\*x) - E_j(\*y)} > \exp\tp{-O\tp{n^{1/3}}}.
\end{align}
\end{theorem}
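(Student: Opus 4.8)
The plan is to convert the $\ell_1$ bound into a sup-norm bound for a $\set{-1,0,1}$-coefficient polynomial on the shifted circle $\partial B(p;q)$, and then prove that this sup-norm is at least $\exp(-O(n^{1/3}))$. First I would apply the elementary inequality $\norm{r}_\infty \le \norm{r}_1$ (the third inequality of Lemma~\ref{lem:norm-ineq}) to $r = P_{\*x} - P_{\*y}$, where $P_{\*a}(z) = \sum_{j=0}^{n-1} E_j(\*a) z^j$, giving $\sum_j \abs{E_j(\*x) - E_j(\*y)} = \norm{P_{\*x} - P_{\*y}}_1 \ge \norm{P_{\*x} - P_{\*y}}_\infty$. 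Then, substituting the explicit binomial expression $E_j(\*a) = \sum_k \binom{k}{j}p^{k-j}q^{j+1}a_k$ and re-summing by the binomial theorem yields the identity $P_{\*a}(z) = q\,Q_{\*a}(p+qz)$; hence $P_{\*x}(z) - P_{\*y}(z) = q\,A(p+qz)$, where $A := Q_{\*x} - Q_{\*y}$ is a nonzero polynomial of degree $< n$ with coefficients in $\set{-1,0,1}$. As $z$ runs over the unit circle, $w = p+qz$ runs over $\partial B(p;q)$, so $\norm{P_{\*x}-P_{\*y}}_\infty = q\cdot\sup\set{\abs{A(w)} \colon w \in \partial B(p;q)}$, and everything reduces to showing $\sup\set{\abs{A(w)} \colon w \in \partial B(p;q)} \ge \exp(-O(n^{1/3}))$.

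For the core estimate I would work near $w=1 \in \partial B(p;q)$, parametrizing $w(\theta) = p+qe^{i\theta}$ and setting $h(\theta) = A(w(\theta)) = \sum_k d_k w(\theta)^k$, with $d_k \in \set{-1,0,1}$ not all zero. The useful structural fact is that $w(\theta)^k = \Exp\bigl[e^{i\theta B_k}\bigr]$ for $B_k \sim \mathrm{Binomial}(k,q)$, so for small $\abs\theta$ one has $w(\theta)^k = \exp\bigl(ikq\theta - \tfrac{pq}{2}k\theta^2 + O(k\abs\theta^3)\bigr)$ --- a frequency-$kq$ phase with a Gaussian damping of scale $\sim k^{-1/2}$. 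Restricting to $\abs\theta \le \delta$ with $\delta = \Theta(n^{-1/3})$ keeps the damping $e^{-pqk\theta^2/2}$ above $\exp(-O(n^{1/3}))$ for all $k<n$ and the cubic error $O(1)$, so on this arc $h$ behaves like $\sum_k d_k e^{ikq\theta}$ damped by factors bounded below by $\exp(-O(n^{1/3}))$. I would then show $\max_{\abs\theta\le\delta}\abs{h(\theta)} \ge \exp(-O(n^{1/3}))$ by a deconvolution/duality argument: assuming the contrary, integrate $h$ against a suitable kernel $\Phi$ supported on $[-\delta,\delta]$ with controlled spectral $\ell_1$ mass to extract a nonzero \emph{integer} linear combination of the $d_k$; integrality, together with the uniform lower bound on the damping at scale $\delta$, forces this combination to have modulus $\ge \exp(-O(n^{1/3}))$, while the standing assumption $\abs h \le \epsilon$ on $[-\delta,\delta]$ forces it to be $\le \epsilon\cdot\delta\cdot\norm{\Phi}_{\mathrm{spec}}$ --- a contradiction once $\epsilon = \exp(-Cn^{1/3})$ with $C$ large enough. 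The exponent $1/3$ is dictated by balancing the arc length $\delta$, the damping $\exp(-\Theta(n\delta^2))$ on the highest-degree coefficient, and the $\Theta(\delta^{-1})$ frequency resolution available on an arc of length $\delta$; these are simultaneously optimized at $\delta \sim n^{-1/3}$.

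The hard part is precisely this global estimate, and the reason it cannot be done cheaply is instructive. A single-point argument --- picking one $w \in \partial B(p;q)$ very close to $1$, writing $A(w) = (w-1)^m g(w)$ with $g(1)$ a nonzero integer, bounding the coefficients of $g$ by Lemma~\ref{lem:factor-coef-bound}, and proceeding as in the proof of Theorem~\ref{thm:multiplicity-to-maximum} --- yields only $n^{-O(m^2)}$, where $m$ is the multiplicity of the root $1$. Since a $\set{-1,0,1}$-coefficient polynomial of degree $n$ can vanish to order $\Theta(\sqrt n)$ at $1$, this gives the far too weak bound $\exp(-\Theta(n\log n))$. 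Thus one genuinely must average over an entire arc rather than localize at a point, and the delicate choice is the test kernel $\Phi$: its spatial support must be of order $n^{-1/3}$ while its spectral spread is matched to the width of the Gaussian-damping window, so that the integrality-versus-smallness contradiction closes with the exponent $n^{1/3}$ instead of $\sqrt n$ or $n$.
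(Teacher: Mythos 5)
Your first paragraph reproduces exactly the reduction the paper gives in Appendix~\ref{sec:reduction}: the identity $P_{\*a}(z)=q\,Q_{\*a}(p+qz)$, the passage from $\norm{P_{\*x}-P_{\*y}}_1$ to $\norm{P_{\*x}-P_{\*y}}_\infty$ via Lemma~\ref{lem:norm-ineq}, and the restatement as a supremum of a $\set{-1,0,1}$-coefficient polynomial over $\partial B(p;q)$. That part is fine. But note that the paper does not prove anything beyond this reduction: the bound $\sup\set{\abs{Q_{\*x}(w)-Q_{\*y}(w)}\colon w\in\partial B(p;q)}\ge\exp(-O(n^{1/3}))$ is imported as the main technical theorem of \cite{de2017optimal,nazarov2017trace}, whose proof in turn rests on the Borwein--Erd\'elyi theorem on Littlewood-type polynomials on subarcs of the unit circle \cite{borwein1997littlewood}. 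So the entire content of the statement lives in the arc estimate, which is precisely the part your proposal leaves as a sketch.

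That sketch has a genuine gap: the ``deconvolution/duality'' step is asserted, not argued, and as described it cannot close. The frequencies $kq$, $k=0,\dots,n-1$, are spaced $q=\Theta(1)$ apart, so any test kernel $\Phi$ whose integral against $h$ isolates a single $d_k$ --- or more generally produces a prescribed \emph{integer}-weighted combination of the $d_k$ --- must resolve frequencies at scale $q$, which by the uncertainty principle forces spatial support of length $\Omega(1/q)=\Omega(1)$, not the $[-\delta,\delta]$ with $\delta=\Theta(n^{-1/3})$ you allow. With support $2\delta$ the extracted functional is a real-weighted mixture of roughly $n^{2/3}$ of the coefficients; there is no integrality to invoke and no reason the mixture is bounded away from $0$, so the ``integrality versus smallness'' contradiction never materializes. (If such a cheap argument worked, the subarc theorem of \cite{borwein1997littlewood} --- whose proof requires genuinely nontrivial complex analysis --- would be superfluous.) Your diagnosis of why the pointwise route via Theorem~\ref{thm:multiplicity-to-maximum} and Lemma~\ref{lem:factor-coef-bound} only yields $n^{-O(m^2)}$, with $m$ possibly as large as $\Theta(\sqrt n)$, is correct, but it only explains why a new idea is needed; it does not supply one. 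To make the proposal a proof you must either invoke explicitly, or reprove, a Borwein--Erd\'elyi-type bound of the form $\sup_{\abs{\theta}\le L}\abs{\sum_k d_k (p+qe^{i\theta})^k}\ge\exp(-c/L)$ for nonzero $\set{-1,0,1}$ coefficient sequences, and only then balance it against the damping $\exp(-\Theta(nL^2))$ at $L\sim n^{-1/3}$, which is how the cited papers actually obtain the exponent $1/3$.
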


This result is sufficient to imply that $\exp\tp{O(n^{1/3})}$ samples can tell the difference between $\+D_{\*x}$ and $\+D_{\*y}$ with high probability. To this end, they defined the following polynomial
\begin{align*}
    P_{\*x}(z) = \sum_{j=0}^{n-1} E_j(\*x) \cdot z^j.
\end{align*}
This makes the left-hand-side of (\ref{ineq:dos-np}) simply $\norm{P_{\*x}-P_{\*y}}_1$. By writing explicitly 
\begin{align*}
E_j(\*x) = \sum_{k=0}^{n-1}\Pr\left[\tilde{x}_j\textup{ comes from }x_k\right] \cdot x_k = \sum_{k=0}^{n-1}\binom{k}{j}p^{k-j}q^{j+1} \cdot x_k, 
\end{align*}
we have that
\begin{align*}
    P_{\*x}(z) &= \sum_{j=0}^{n-1} E_j(\*x) \cdot z^j = \sum_{j=0}^{n-1} \sum_{k=0}^{n-1}\binom{k}{j}p^{k-j}q^{j+1} \cdot x_k \cdot z^j \\
    &= q\sum_{k=0}^{n-1}x_k \sum_{j=0}^{n-1}\binom{k}{j}p^{k-j}(qz)^j \\
    &= q\sum_{k=0}^{n-1}x_k \cdot \tp{p+qz}^k \\
    &= q \cdot Q_{\*x}(p+qz).
\end{align*}
In light of Lemma~\ref{lem:norm-ineq}, one might as well bound $\norm{P_{\*x} - P_{\*y}}_\infty$. Keeping in mind that the map $z \mapsto p+qz$ shifts the complex unit circle $\partial B(0;1)$ to $\partial B\tp{p; q}$, so far we have reduced the problem to understanding the following supremum
\begin{align*}
\sup \set{\abs{Q_{\*x}(w) - Q_{\*y}(w)} \colon w \in \partial B\tp{p; q}}. 
\end{align*}
Using a result of~\cite{borwein1997littlewood}, \cite{de2019optimal} and~\cite{nazarov2017trace} proved that the above supremum is at least $\exp\tp{-O(n^{1/3})}$, which is their main technical result.

To summarize, we have the following generic lemma.
\begin{lemma} \label{lem:mean-to-complex}
Let $\*x, \*y \in \set{0,1}^n$ be two strings. Then
\begin{align*}
    \frac{1}{\sqrt{n+1}}\norm{P_{\*x} - P_{\*y}}_1 \le q \cdot \sup \set{\abs{Q_{\*x}(w) - Q_{\*y}(w)} \colon w \in \partial B\tp{p; q}} \le \norm{P_{\*x} - P_{\*y}}_1.
\end{align*}
\end{lemma}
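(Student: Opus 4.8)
The plan is simply to chain together the substitution identity already derived in the body of this appendix with the norm inequalities of Lemma~\ref{lem:norm-ineq}. First I would record that, as computed above, $P_{\*x}(z) = q\cdot Q_{\*x}(p+qz)$ for every string $\*x$, so by linearity of $\*x \mapsto P_{\*x}$ we get $P_{\*x}(z) - P_{\*y}(z) = q\bigl(Q_{\*x}(p+qz) - Q_{\*y}(p+qz)\bigr)$. Since $q>0$, the affine map $z \mapsto p+qz$ carries the unit circle $\partial B(0;1)$ \emph{bijectively} onto $\partial B(p;q)$; taking the supremum of $\abs{P_{\*x}(z)-P_{\*y}(z)}$ over $z \in \partial B(0;1)$ and substituting $w = p+qz$ therefore yields
\begin{align*}
\norm{P_{\*x}-P_{\*y}}_\infty = q\cdot \sup\set{\abs{Q_{\*x}(w)-Q_{\*y}(w)} \colon w \in \partial B(p;q)},
\end{align*}
which identifies the middle quantity in the claimed inequality \emph{exactly} with $\norm{P_{\*x}-P_{\*y}}_\infty$.

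Second, I would apply Lemma~\ref{lem:norm-ineq} to the polynomial $P_{\*x}-P_{\*y}$. It is a linear combination of $1, z, \ldots, z^{n-1}$, hence has degree at most $n-1$, and the lemma gives $\frac{1}{\sqrt{n}}\norm{P_{\*x}-P_{\*y}}_1 \le \norm{P_{\*x}-P_{\*y}}_\infty \le \norm{P_{\*x}-P_{\*y}}_1$; in particular the (slightly weaker, but cleaner) bound $\frac{1}{\sqrt{n+1}}\norm{P_{\*x}-P_{\*y}}_1 \le \norm{P_{\*x}-P_{\*y}}_\infty \le \norm{P_{\*x}-P_{\*y}}_1$ holds. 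Combining this with the displayed identity from the previous paragraph produces precisely the two-sided bound in the statement.

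There is essentially no obstacle here: the substantive step — rewriting $E_j(\*x)$ via binomial coefficients and recognizing $\sum_j \binom{k}{j} p^{k-j}(qz)^j = (p+qz)^k$ — was already carried out above, and Lemma~\ref{lem:norm-ineq} is proved in Section~\ref{sec:preliminaries}. The only points needing a line of care are (i) noting that $z \mapsto p+qz$ is a bijection of the relevant circles, so that the two suprema genuinely coincide rather than one merely dominating the other, and (ii) the harmless degree bookkeeping that lets us pass from $\sqrt{n}$ to $\sqrt{n+1}$. I would also close with one sentence making the intended consequence explicit, namely that a mean-based distinguisher succeeds with roughly $\bigl(\sup\set{\abs{Q_{\*x}(w)-Q_{\*y}(w)} \colon w \in \partial B(p;q)}\bigr)^{-2}$ traces, and that this is tight up to the squaring — which is exactly why the lemma is phrased as a two-sided estimate sandwiching $\norm{P_{\*x}-P_{\*y}}_1$.
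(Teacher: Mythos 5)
Your proposal is correct and takes essentially the same route as the paper: the paper's proof is the one-line observation that $q^{-1}P_{\*x}(z) = Q_{\*x}(p+qz)$ followed by an application of Lemma~\ref{lem:norm-ineq} to $q^{-1}\tp{P_{\*x}-P_{\*y}}$, which is just a trivial rescaling of your version (applying the lemma to $P_{\*x}-P_{\*y}$ and then substituting the circle identity). Your $\frac{1}{\sqrt{n}}$ refinement from the degree-$(n-1)$ bookkeeping is fine and only strengthens the stated bound.
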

\begin{proof}
We have that $q^{-1}P_{\*x}(z) = Q_{\*x}(w)$ where $w=p+qz$. Applying Lemma~\ref{lem:norm-ineq} to the polynomial $q^{-1}\tp{P_{\*x}-P_{\*y}}$ gives the lemma.
\end{proof}

A common bound in this paper is of the form
\begin{align*}
    \norm{P_{\*x}-P_{\*y}}_1 \ge n^{-O(d)}
\end{align*}
for some parameter $d$. A standard Chernoff-Hoeffding bound argument shows that $n^{O(d)}$ traces are sufficient for a mean-based algorithm to distinguish between $\*x$ and $\*y$. On the other hand, if for some strings $\*x$ and $\*y$ one can show
\begin{align*}
    \norm{P_{\*x}-P_{\*y}}_1 \le \eps,
\end{align*}
then it is the case that $\Omega(1/\eps)$ traces are required for distinguishing between $\*x$ and $\*y$ by mean-based algorithms. For a formal discussion about the sample complexity versus various notions of distances related to the trace problem we refer the reader to \cite{HoldenL18}.

\section{Distinguishing between strings within small Hamming distance} \label{sec:const-Hamming} 
We prove Theorem~\ref{thm:main-const-Hamming} in this section. We remark that the same result was proven in~\cite{KrishnamurthyM019}, which uses a previous result regarding reconstructing strings from their ``$k$-decks'' (i.e. the multi-set of subsequences of length $k$)\cite{KrasikovR97}. One of the results in~\cite{KrasikovR97} states that strings within Hamming distance $2k$ have different $k$-decks. Therefore when the deletion probability $p \le 1 - k/n$, the traces will have length at least $k$ in expectation and we can reconstruct the $k$-deck with high probability in $n^{O(k)}$ traces. This is exactly the argument in~\cite{KrishnamurthyM019}, but we note here that this argument does not yield a mean-based algorithm.

Theorem~\ref{thm:main-const-Hamming} states that the same task can be accomplished also by mean-based algorithms. With the machinery established in this paper, this will be an immediate consequence of Descartes' rule of sign changes (Lemma~\ref{lem:Descartes}).

\mainConstHamming*
\begin{proof}
Let $Q(w) = Q_{\*x}(w) - Q_{\*y}(w)$. We will show that the multiplicity of zero of $Q$ at 1 is at most $d$. 

We note that $Q(w)$ is a polynomial with at most $d$ non-zero terms. Therefore the number of sign changes $C(Q)$ can never exceed $d$. By Lemma~\ref{lem:Descartes}, the number of real positive roots of $Q$ is at most $d$. In particular, the multiplicity of zero of $Q$ at 1 is at most $d$. Thus by Theorem~\ref{thm:multiplicity-of-one} we have 
\begin{align*}
	\norm{\*E(\*x)-\*E(\*y)}_{\ell_1} \ge \frac{q}{e}\tp{\frac{q}{n}}^d.
\end{align*}
The sample complexity bound $n^{O(d)}$ follows from Proposition~\ref{prop:l1-dist}.
\end{proof}

\end{document}